\documentclass[12pt, reqno]{amsart}
\usepackage{amsmath, amsthm, amscd, amsfonts, amssymb, graphicx, color}
\usepackage[bookmarksnumbered, colorlinks, plainpages]{hyperref}
\hypersetup{colorlinks=true,linkcolor=red, anchorcolor=green, citecolor=cyan, urlcolor=red, filecolor=magenta, pdftoolbar=true}
\input{mathrsfs.sty}

\textheight 22truecm \textwidth 15truecm
\setlength{\oddsidemargin}{0.35in}\setlength{\evensidemargin}{0.35in}

\setlength{\topmargin}{-.5cm}

\newtheorem{theorem}{Theorem}[section]

\newtheorem{proposition}[theorem]{Proposition}
\newtheorem{corollary}[theorem]{Corollary}
\theoremstyle{definition}
\newtheorem{definition}[theorem]{Definition}
\newtheorem{example}[theorem]{Example}

\theoremstyle{remark}
\newtheorem{remark}[theorem]{Remark}
\numberwithin{equation}{section}

\begin{document}

\title[Hilbert $C^*$-module independence]{Hilbert $C^*$-module independence}

\author[R. Eskandari, J. Hamhalter, M. S. Moslehian, V. M. Manuilov]{R. Eskandari$^1$, J. Hamhalter$^2$ M. S. Moslehian$^3$, \MakeLowercase{and} V. M. Manuilov$^4$}

\address{$^1$Department of Mathematics, Faculty of Science, Farhangian University, Tehran, Iran.}
\email{eskandarirasoul@yahoo.com}

\address{$^2$Department of Mathematics, Faculty if Electrical Engineering, Czech Technical University in Prague, Czech Republic}
\email{hamhalte@fel.cvut.cz}

\address{$^{3}$Department of Pure Mathematics, Center of Excellence in Analysis on Algebraic Structures (CEAAS), Ferdowsi University of Mashhad, P. O. Box 1159, Mashhad 91775, Iran.}
\email{moslehian@um.ac.ir; moslehian@yahoo.com}

\address{$^4$ Moscow Center for Fundamental and Applied Mathematics, and Department of Mechanics and Mathematics, Moscow State University, Moscow, 119991, Russia.} 
\email{manuilov@mech.math.msu.su}

\renewcommand{\subjclassname}{%
\textup{2020} Mathematics Subject Classification}
\subjclass[]{46L08, 46L05, 47A62.}

\keywords{Hilbert $C^*$-module; $C^*$-independence; state; determining element; module independence.}

\begin{abstract}
We introduce the notion of Hilbert $C^*$-module independence: Let $\mathscr{A}$ be a unital $C^*$-algebra and let $\mathscr{E}_i\subseteq \mathscr{E},\,\,i=1, 2$, be ternary subspaces of a Hilbert $\mathscr{A}$-module $\mathscr{E}$. Then $\mathscr{E}_1$ and $\mathscr{E}_2$ are said to be Hilbert $C^*$-module independent if there are positive constants $m$ and $M$ such that for every state $\varphi_i$ on $\langle \mathscr{E}_i,\mathscr{E}_i\rangle,\,\,i=1, 2$, there exists a state $\varphi$ on $\mathscr{A}$ such that
\begin{align*}
m\varphi_i(|x|)\leq \varphi(|x|) \leq M\varphi_i(|x|^2)^{\frac{1}{2}},\qquad \mbox{for all~}x\in \mathscr{E}_i, i=1, 2.
\end{align*}
We show that it is a natural generalization of the notion of $C^*$-independence of $C^*$-algebras.   Moreover, we demonstrate that even in case of $C^*$-algebras this concept of independence is new and  has a nice characterization in terms of extensions. This enriches the theory of independence of $C^*$-algebras. We show that if  $\langle \mathscr{E}_1,\mathscr{E}_1\rangle $ has the quasi extension property and $z\in \mathscr{E}_1\cap \mathscr{E}_2$ with $\|z\|=1$, then $|z|=1$. Several characterizations of Hilbert $C^*$-module independence and a new characterization of $C^*$-independence are given. One of characterizations states that if $z_0\in \mathscr{E}_1\cap \mathscr{E}_2$ is such that $\langle z_0,z_0\rangle=1$, then $\mathscr{E}_1$ and $\mathscr{E}_2$ are Hilbert $C^*$-module independent if and only if  $\|\langle x,z_0\rangle\langle y,z_0\rangle\|=\|\langle x,z_0\rangle\|\,\|\langle y,z_0\rangle\|$ for all $x\in \mathscr{E}_1$ and $y\in \mathscr{E}_2$. We also provide some technical examples and counterexamples to illustrate our results.
\end{abstract}

\maketitle

\section{Introduction}

Throughout the paper, we assume that $\mathscr{A}$ is a unital $C^*$-algebra with unit $1$. The set $\{a^*a: a\in\mathscr{A}\}$ consisting of all positive elements of $\mathscr{A}$ is denoted by $\mathscr{A}^+$. By a state, we mean a bounded linear functional $\varphi$ on $\mathscr{A}$ of norm-one taking positive elements to positive numbers. The convex set of all states of $\mathscr{A}$ is denoted by $\mathcal{S}(A)$, which is a compact Hausdorff space in its relative weak$^*$-topology inherited from the dual of $\mathscr{A}$. The restriction of a state $\varphi$ to a subalgebra $\mathscr{B}$ is denoted by $\varphi|_\mathscr{B}$.

We employ without further comment the following fundamental facts about elements and states of a $C^*$-algebra $\mathscr{A}$:

\begin{itemize}
\item[(i)]\label{(i)} Given a state $\varphi$, the Cauchy--Schwarz inequality 
\begin{align}\label{cs}
|\varphi(b^*a)|^2\leq \varphi(a^*a)\varphi(b^*b),\qquad a,b\in\mathscr{A},
\end{align}
holds. In particular, if $a$ is self-adjoint, then we get the Kadison inequality:
\begin{align}\label{kad}\varphi(a)^2\leq \varphi(a^2).\end{align}

\item[(ii)] For each  element $a\in \mathscr{A}$ and each state $\varphi$ on $\mathscr{A}$, it follows from \eqref{cs} with $b=1$ that the Choi inequality holds:
\begin{align}\label{choi}
|\varphi(a)|^2\leq \varphi(a^*a).
\end{align}
If equality holds in the later inequality, then we say that $\varphi$ is \emph{definite} at $a$. In this case, for all $b\in\mathscr{A}$, it follows from \eqref{cs} that
\[\left|\varphi\big((a-\varphi(a)1)b\big)\right|^2\leq \varphi\big((a^*-\overline{\varphi(a)}1)(a-\varphi(a)1)\big)\,\varphi(b^*b)=0,\]
whence
$\varphi(ab)=\varphi(a)\varphi(b)$ and similarly $\varphi(ba)=\varphi(a)\varphi(b)$. Thus $\varphi$ has a kind of multiplicative property; see \cite[p. 22]{HAM3}.

\item[(iii)] If $a\in \mathscr{A}$ is self-adjoint and $\varphi$ is a state on $\mathscr{A}$, then it follows from $|a|\leq \|a||$ that $|a|^2\leq \|a\|\,|a|$, whence 
\begin{align}\label{a2}\varphi(a^2) \leq \|a\| \varphi(|a|).\end{align}

\item[(iv)] If $a\in \mathscr{A}$ is self-adjoint, then there is a state $\varphi$ on $\mathscr{A}$ such that $\varphi(a)=\|a\|$. If this occurs, it follows from the Kadison inequality that $\varphi(a^2)\leq \|a^2\|=\|a\|^2=\varphi(a)^2 \leq \varphi(a^2)$. Therefore, $\varphi$ is definite at $a$. It implies that $\varphi$ is multiplicative on the  $C^*$-algebra  generated by $a$.  Especially, $\varphi(a^n)=\varphi(a)^n$.
In particular, if $a\geq 0$ and $\varphi(a)=\|a\|=1$, then $\varphi(a^2)=\varphi(a^{\frac{1}{2}})=1$.
\end{itemize}
There are several noncommutative versions of the independence in the classical probability spaces. One of them is the notion of $C^*$-independence, which was first introduced by Haag and Kastler \cite{HK} to formulate the degrees of independence between local
physical systems in the framework of algebraic quantum field theory; see also \cite{RED}. 

Let $\mathscr{A}_1$ and $\mathscr{A}_2$ be $C^*$-subalgebras of a $C^*$-algebra $\mathscr{A}$ and let all algebras have the same identity. These subalgebras are called \emph{$C^*$-independent (or statistical independent)} if for every state $\varphi_1$ of $\mathscr{A}_1$ and every state $\varphi_2$ of $\mathscr{A}_2$, there exists a state $\varphi$ of $\mathscr{A}$  extending both $\varphi_1$ and $\varphi_2$. Florig and Summers \cite{FLS} proved that the $C^*$-independence is equivalent to the statement that for every positive norm-one elements $a\in\mathscr{A}_1$ and $b\in \mathscr{A}_2$, there is a state $\varphi$ of $\mathscr{A}$ such that $\varphi(a)=\varphi(b)=1$. Furthermore, they showed that $\mathscr{A}_1$ and $\mathscr{A}_2$ are $C^*$-independent if and only if $\|ab\|=\|a\|\,\|b\|$ for all $a\in\mathscr{A}_1$ and $b\in \mathscr{A}_2$.\\
There are a vast range of examples of nonabelian and abelian subalgebras that are (are not) $C^*$-independent; see \cite[Chapter 11]{HAM3}.

There is another important type of independence. Two $C^*$-subalgebras $\mathscr{A}_1$ and $\mathscr{A}_2$ are called \emph{Schlieder independent}, briefly \emph{$S$-independent}, if $ab\neq 0$ whenever $a\in\mathscr{A}_1$ and $b\in \mathscr{A}_2$ are nonzero; see \cite{SCH}. It is known that
\[C^*\mbox{-independence} \Longrightarrow S\mbox{-independence}\]
and the reverse implications are valid if the $C^*$-subalgebras $\mathscr{A}_1$ and $\mathscr{A}_2$ elementwisely commute; see \cite{ROO, HAM1}.

Another significant noncommutative notion of independence is that of operationally $C^*$-independence. Two $C^*$-subalgebras $\mathscr{A}_1$ and $\mathscr{A}_2$ of a $C^*$-algebra $\mathscr{A}$ are said to be \emph{operationally $C^*$-independent} if any two completely positive unit preserving maps $T_1: \mathscr{A}_1\to \mathscr{A}_1$ and $T_2: \mathscr{A}_2\to \mathscr{A}_2$ have a joint extension to a completely positive unit preserving map $T:\mathscr{A} \to \mathscr{A}$. Evidently, the operationally $C^*$-independency implies the $C^*$-independency of $C^*$-subalgebras; see \cite{RS, HAM3}.

The concept of a Hilbert $C^*$-module is a generalization of that of a Hilbert space in which the inner product takes its values in a $C^*$-algebra instead of the field of complex numbers. However, some basic properties of Hilbert spaces are no longer valid in the framework of Hilbert $C^*$-modules, in general. For instance, not any closed submodule of a Hilbert $C^*$-module is orthogonally complemented, and not every bounded $C^*$-linear operator on a Hilbert $C^*$-module is adjointable. Furthermore, a $C^*$-algebra $\mathscr{A}$ can be regarded as a Hilbert module over itself via $\langle a,b\rangle=a^*b\,\,(a, b \in \mathscr{A})$. The norm on a (right) Hilbert $C^*$-module $(\mathscr{E}, \langle\cdot,\cdot\rangle)$ over a $C^*$-algebra $\mathscr{A}$ is defined by $\|x\|:=\|\langle x, x\rangle\|^{\frac{1}{2}}$. The positive square root of $\langle x, x\rangle$ in $\mathscr{A}$ is denoted by $|x|$ for $x\in \mathscr{E}$. For a closed subspace $\mathscr{F}$ of $\mathscr{E}$, we denote by $\langle\mathscr{F},\mathscr{F}\rangle$ the closed linear span of $\{\langle x,y\rangle: x, y\in \mathscr{F}\}$. 

The reader is referred to \cite{R} for more information on the theory of $C^*$-algebras and to \cite{LAN, MT, FRA1} for terminology and notation on Hilbert $C^*$-modules.

The papers is organized as follows. In the next section, we introduce the notion of Hilbert $C^*$-module independence of ternary spaces by employing states on considered $C^*$-algebras. We prove that the states can be replaced by pure states. In addition, we show that it is a natural generalization of the notion of $C^*$-independence in the category of unital $C^*$-algebras. Furthermore, we establish a module counterpart of the fact that if $\mathscr{A}_1$ and $\mathscr{A}_2$ are $C^*$-independent, then $\mathscr{A}_1\cap \mathscr{A}_2=\mathbb{C}$, by showing that if $\langle \mathscr{E}_1,\mathscr{E}_1\rangle $ has the quasi extension property and $z\in \mathscr{E}_1\cap \mathscr{E}_2$ with $\|z\|=1$, then $|z|=1$. In Section 3, we provide several characterizations of Hilbert $C^*$-module independence and give a new characterization of $C^*$-independence. One of characterizations states that if $z_0\in \mathscr{E}_1\cap \mathscr{E}_2$ is such that $\langle z_0,z_0\rangle=1$, then $\mathscr{E}_1$ and $\mathscr{E}_2$ are Hilbert $C^*$-module independent if and only if  $\|\langle x,z_0\rangle\langle y,z_0\rangle\|=\|\langle x,z_0\rangle\|\,\|\langle y,z_0\rangle\|$ for all $x\in \mathscr{E}_1$ and $y\in \mathscr{E}_2$. In Section 4, we put a $C^*$-structure on a certain quotient module $[E]$ and show that a natural assumption, the Hilbert $C^*$-module independence is equivalent to $C^*$-independence of particular $C^*$-subalgebras of $[E]$. In the last section, we provide some technical examples to illustrate our results.


\section{Module independence and $C^*$-independence}

 We start our work with the case of  $C^*$-algebras.  We propose a new concept of independence  of $C^*$-algebras, called module independence, that is more general than the classical $C^*$-independence. This concept  captures the independence of subalgebras not in terms of extensions but rather  in terms of  inequalities and  kernel spaces.
 
\begin{definition}
Let $\mathscr{A}_1$ and  $\mathscr{A}_2$ be unital $C^*$-subalgebras of a unital $C^*$-algebra $\mathscr{A}$. We say that  $\mathscr{A}_1$ and  $\mathscr{A}_2$ are module independent if the following condition holds. There are positive constants $m$ and $M$ such that for every pair of states $\varphi_1$ and $\varphi_2$  on
$\mathscr{A}_1$ and $\mathscr{A}_2$, respectively, there is a state $\varphi$ on $\mathscr{A}$ such that the following inequalities are true:
      
\begin{align}\label{IN1}
m\varphi_i(|x|)\leq \varphi(|x|) \leq M\varphi_i(|x|^2)^{\frac{1}{2}},\qquad \mbox{for all~}x\in \mathscr{A}_i, i=1, 2.
\end{align} 
\end{definition}

We now give some characterization of this concept in terms of kernel spaces and conditional extensions.  
Given a state $\varphi$ on a $C^*$-algebra $\mathscr{A}$, we  denote its left kernel by
  \[  \mathscr{N}_\varphi =\{x\in \mathscr{A} \, |\,    \varphi(x^*x)=0\}  . \]
 It is well known that $\mathscr{N}_\varphi\subseteq \ker \varphi$ and that the positive parts of the kernel and left kernel coincide. 
\begin{theorem}\label{I1}
Let $\mathscr{A}_1$ and  $\mathscr{A}_2$ be unital
$C^*$-subalgebras of a unital $C^*$-algebra  
$\mathscr{A}$.	(We do not assume that algebras in question have common unit.) Then the following statements are equivalent: 
\begin{enumerate}
	\item  $\mathscr{A}_1$ and  $\mathscr{A}_2$ are module independent in 
	$\mathscr{A}$. 
	\item There is a positive constant  $m$ such that  for each couple of states $\varphi_1$ in $ S(\mathscr A_1)$ and  
	$\varphi_2$  in $S(\mathscr{A}_2)$, there is a state $\varphi\in S(\mathscr{A})$  with 
	\[  \varphi(1_{\mathscr{A}_1}),    \varphi(1_{\mathscr{A}_2})\ge m     \]
such that 
	\[ \mathscr{N}_{\varphi|\mathscr{A}_1} =        \mathscr{N}_{\varphi_1}, \qquad 
	 \mathscr{N}_{\varphi|\mathscr{A}_2} =        \mathscr{N}_{\varphi_2}   . \] 
	
	\item  There is a constant $m>0$ such that  for each couple of pure states $\varphi_1$ and $\varphi_2$ on $\mathscr{A}_1$ and $\mathscr{A}_2$, respectively,   there is a state $\varphi$ on $\mathscr{A}$  with
		\[  \varphi(1_{\mathscr{A}_1}),    \varphi(1_{\mathscr{A}_2})\ge m     \]
 	such that the restriction of $\varphi$ to $\mathscr{A}_1$ and $\mathscr{A}_2$ is a scalar multiple  of $\varphi_1$ and $\varphi_2$, respectively. 
	\item There is a constant $m>0$ such that  for each couple of  states $\varphi_1$ and $\varphi_2$ on $\mathscr{A}_1$ and $\mathscr{A}_2$, respectively,   there is a state $\varphi$ on $\mathscr{A}$  with
	\[  \varphi(1_{\mathscr{A}_1}),    \varphi(1_{\mathscr{A}_2})\ge m     \]
 such that the restriction of $\varphi$ to $\mathscr{A}_1$ and $\mathscr{A}_2$ is a scalar multiple  of $\varphi_1$ and $\varphi_2$, respectively. 
\end{enumerate} 
\end{theorem}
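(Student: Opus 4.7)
The plan is to close the four conditions in a cycle $(1)\Rightarrow(2)\Rightarrow(3)\Rightarrow(4)\Rightarrow(1)$. Three of these are essentially direct from the defining inequalities together with standard facts about pure states and maximal left ideals; the real work is in $(3)\Rightarrow(4)$, which has to upgrade the pure-state conclusion to arbitrary states.

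\emph{The easy implications.} For $(1)\Rightarrow(2)$: substitute $x=1_{\mathscr{A}_i}$ in the lower bound to get $\varphi(1_{\mathscr{A}_i})\ge m$; for the kernel equality, note that for any state $\psi$ the Kadison inequality together with $|x|^2\le\|x\|\cdot|x|$ gives $\psi(|x|)^2\le\psi(x^*x)\le\|x\|\psi(|x|)$, so $\psi(x^*x)=0$ iff $\psi(|x|)=0$; the two bounds of $(1)$ then force $\varphi(|x|)=0\Leftrightarrow\varphi_i(|x|)=0$, hence $\mathscr{N}_{\varphi|\mathscr{A}_i}=\mathscr{N}_{\varphi_i}$. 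For $(2)\Rightarrow(3)$: for pure $\varphi_i$, the left kernel $\mathscr{N}_{\varphi_i}$ is a maximal modular left ideal, so the normalized functional $\varphi|_{\mathscr{A}_i}/\varphi(1_{\mathscr{A}_i})$ is a state with maximal left kernel and therefore pure with the same kernel as $\varphi_i$; by the bijection between pure states and maximal modular left ideals, it equals $\varphi_i$, whence $\varphi|_{\mathscr{A}_i}=\varphi(1_{\mathscr{A}_i})\varphi_i$. For $(4)\Rightarrow(1)$: if $\varphi|_{\mathscr{A}_i}=c_i\varphi_i$ with $c_i\in[m,1]$, then $\varphi(|x|)=c_i\varphi_i(|x|)$ sits between $m\varphi_i(|x|)$ and $\varphi_i(|x|)\le\varphi_i(|x|^2)^{1/2}$ (Kadison again), so $(1)$ holds with $M=1$.

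\emph{The main step $(3)\Rightarrow(4)$.} I would handle this in three stages. \emph{Stage 1 (one side pure).} If $\varphi_2$ is pure and $\varphi_1=\sum_{i=1}^N s_i\omega_1^{(i)}$ is a finite convex combination of pure states, use $(3)$ to select states $\psi^{(i)}$ with $\psi^{(i)}|_{\mathscr{A}_1}=c_1^{(i)}\omega_1^{(i)}$ and $\psi^{(i)}|_{\mathscr{A}_2}=c_2^{(i)}\varphi_2$, all scalars in $[m,1]$. The reweighted convex combination $\Psi=\sum_i t_i\psi^{(i)}$ with $t_i=c_1 s_i/c_1^{(i)}$ and $c_1=\bigl(\sum_k s_k/c_1^{(k)}\bigr)^{-1}\in[m,1]$ then satisfies $\Psi|_{\mathscr{A}_1}=c_1\varphi_1$ and $\Psi|_{\mathscr{A}_2}=c'\varphi_2$ with both scalars bounded below by $m^2$. \emph{Stage 2 (both sides finite combinations).} Iterate the same reweighting in the second variable using the $\Psi^{(j)}$ from Stage 1 for each pure atom $\omega_2^{(j)}$ of $\varphi_2$, producing a state satisfying $(4)$ with a uniform lower bound of the form $m^k$ for some fixed power $k$. \emph{Stage 3 (arbitrary states).} Approximate $\varphi_1,\varphi_2$ weak$^*$ by finite convex combinations of pure states (Krein--Milman), obtain the corresponding states $\Psi^{(n)}$ and scalars $c_i^{(n)}\in[m^k,1]$, and extract a joint weak$^*$-convergent subnet whose limit $\Psi$ satisfies $\Psi|_{\mathscr{A}_i}=c_i\varphi_i$ with $c_i\ge m^k$.

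The main obstacle is the reweighting in Stage 1: a naive integration over a Choquet decomposition of $\varphi_i$ would yield a restriction that is only an \emph{average} (with varying coefficients) of $\varphi_i$, not a genuine scalar multiple. The explicit rescaling $t_i=c_1 s_i/c_1^{(i)}$ converts this average into a scalar multiple on one side, and the uniform lower bound $c_j^{(i)}\ge m$ is exactly what ensures that the coefficient appearing on the \emph{other} side remains bounded below by a positive constant. Once this convex-combinatorial step is established, the iteration in Stage 2 and the weak$^*$-compactness argument in Stage 3 are routine.
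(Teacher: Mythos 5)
Your proposal is correct, and it follows the same overall architecture as the paper: the cycle $(1)\Rightarrow(2)\Rightarrow(3)\Rightarrow(4)\Rightarrow(1)$, with the kernel identification via $\psi(|x|)^2\le\psi(x^*x)\le\|x\|\,\psi(|x|)$ for $(1)\Rightarrow(2)$, a pure-state rigidity argument for $(2)\Rightarrow(3)$, a convex-combination step followed by Krein--Milman and weak$^*$-compactness for $(3)\Rightarrow(4)$, and the Kadison inequality for $(4)\Rightarrow(1)$. The one substantive divergence is in $(3)\Rightarrow(4)$, and it is in your favour. Given $\varphi_1=\sum_i\alpha_i\varphi_{1,i}$ and states $\tilde{\varphi}_{1,i}$ restricting to $\lambda_i\varphi_{1,i}$ on $\mathscr{A}_1$, the paper takes the \emph{unweighted} combination $\sum_i\alpha_i\tilde{\varphi}_{1,i}$ and asserts that its restriction to $\mathscr{A}_1$ is a multiple of $\varphi_1$; but that restriction is $\sum_i\alpha_i\lambda_i\varphi_{1,i}$, which is a scalar multiple of $\varphi_1$ only when the $\lambda_i$ happen to coincide. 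Your rescaling $t_i=c_1 s_i/c_1^{(i)}$ with $c_1=\bigl(\sum_k s_k/c_1^{(k)}\bigr)^{-1}$ is exactly what is needed to convert this average into a genuine scalar multiple, and the uniform bound $c_1^{(i)}\ge m$ keeps $c_1\in[m,1]$ and keeps the coefficient appearing on the $\mathscr{A}_2$ side bounded below by $m$ as well; so the ``main obstacle'' you flag is precisely the gap in the published argument, and your version repairs it. (Incidentally, the constant $m$ itself survives all three stages, so the $m^2$ and $m^k$ losses you allow for are unnecessary, though harmless.) The only other difference is cosmetic: in $(2)\Rightarrow(3)$ you invoke the bijection between pure states and maximal modular left ideals, whereas the paper uses $\ker\varphi_1=\mathscr{N}_{\varphi_1}+\mathscr{N}^*_{\varphi_1}$ together with a codimension count; the two routes are interchangeable.
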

\begin{proof}
	(1) implies (2): Suppose that $m$ and $M$ are constants from equation (\ref{IN1}). Suppose that there are given states $\varphi_1$ and $\varphi_2$ on $\mathscr{A}_1$ and $\mathscr{A}_2$, respectively. Let $\varphi$  be a state on $\mathscr{A}$ fulfilling (\ref{IN1}). Take now 
	$x\in\mathscr{N}_{\varphi_1}$.  It means that $\varphi_1(x^*x)=\varphi_1(|x|^2)=0$. By (\ref{IN1}), we obtain $\varphi(|x|)=0$.  Therefore $\varphi(|x|^2)=0$. In other words,  we have 
	$\mathscr{N}_{\varphi_1}\subseteq \mathscr{N}_{\varphi}$.
	
	On the other hand, let us pick $x\in \mathscr{A}_1$ with $\varphi(|x|^2)=0$. Again using (\ref{IN1}), we obtain $\varphi(|x|^2)=0\ge m \varphi_1(|x|^2)$ and so   $\varphi_1(|x|^2)=0$.      It says that $\mathscr{N}_{\varphi}\cap \mathscr{A}_1\subseteq \mathscr{N}_{\varphi_1}$. 
Employing the same reasoning for $\varphi_2$, we can see that 
		\[ \mathscr{N}_{\varphi|\mathscr{A}_1} =        \mathscr{N}_{\varphi_1}            \] 
	and 
	\[ \mathscr{N}_{\varphi|\mathscr{A}_2} =        \mathscr{N}_{\varphi_2}   . \] 
	
	Now we prove that (2) implies (3). 
	Let us fix pure states $\mathscr{\varphi}_1$ and 	$\mathscr{\varphi}_2$ on $\mathscr{A}_1$ and 	$\mathscr{A}_2$, respectively. Let $\varphi$ be a state on $\mathscr{A}$ satisfying condition (2). Then also $\mathscr{N}^*_{\varphi_1}= \mathscr{N}^*_{\varphi}\cap \mathscr A_1$. By the property of pure states \cite[Theorem 5.3.4]{Pedersen}, we know that
	\[ \ker\,  \varphi_1=\mathscr{N}_{\varphi_1}+\mathscr{N}^*_{\varphi_1} \subseteq \mathscr{N}_\varphi+\mathscr{N}^*_\varphi\subseteq \ker\,  \varphi.      \] 
Simple linear algebra says that $\varphi$ on $\mathscr{A}_1$ is   a scalar multiple of $\varphi_1$. The same conclusion holds for $\varphi_2$. 	\\

Let us now prove (3) implies (4). Let $m$ be a constant from condition (3). It will be convenient to introduce the set 
  \[ F_m = \{\varphi\in S(\mathscr{A})\,|\, \varphi(1_{\mathscr A_1}), \varphi(1_{\mathscr A_2})\ge m       \} .  \] 
It is a weak$^*$-closed face of the state space of $\mathscr{A}$. 
We shall first prove (4) in a special case when $\varphi_1$ is a convex combination of pure states and $\varphi_2$ is a pure state. More specifically, let 
\[  \varphi_1=\sum_{i=1}^n \alpha_i\varphi_{1,i},   \]
where all $\varphi_{1,i}$ are pure and $\alpha_i\ge 0$ with $\sum_{i=1}^n\alpha_i=1$. Let $\varphi_2$ be pure. From (2), for each $i$,   we can find a   state $\tilde{\varphi}_{1,i}$ in $F_m$  such that each state  $\tilde{\varphi}_{1,i}$ is a multiple of state $\varphi_{1,i}$ on $\mathscr{A}_1$ and multiple of state $\varphi_2$ on $\mathscr{A}_2$. Taking now the convex combination 
\[  \varphi= \sum_{i=1}^n \alpha_i   \tilde{\varphi}_{1,i},    \]   
we obtain a  state in $F_m$ that coincides with multiples of $\varphi_1$ and  $\varphi_2$ on
 $\mathscr{A}_1$ and $\mathscr{A}_2$, respectively. Suppose now that the state $\varphi_1$ is of the same form as before and we have more general state 
 \[   \varphi_2=\sum_{j=1}^m \beta_j\varphi_{2,j},   \]            
where all $\varphi_{2,j}$ are pure and $\beta_j\ge 0$ with $\sum_{j=1}^m\beta_j=1$.
By the same reasoning as above, for each $j$, we can find a  state 
$\tilde{\varphi}_{2,j}$ in $F_m$ that coincides with multiple of  $\phi_1$ on $\mathscr{A}_1$ and 
multiple of $\varphi_{2,j}$ on $\mathscr{A}_2$. Again taking 
\[  \varphi= \sum_{j=1}^m \beta_j \tilde{\varphi}_{2,j},             \]
we obtain a state in $F_m$ that restricts to  multiples of $\varphi_1$ and $\varphi_2$ on $\mathscr{A}_1$ and  $\mathscr{A}_2$, respectively.\\

Let us now consider general states $\varphi_1$ and $\varphi_2$ on $\mathscr{A}_1$ and $\mathscr{A}_2$, respectively. By the Krein--Milman theorem, there are nets of states 
$(\varphi_{1,\alpha})_\alpha$ and $(\varphi_{2,\alpha})_\alpha$ on $\mathscr{A}_1$ and  $\mathscr{A}_2$, respectively,  with the following properties:
  \begin{enumerate}
  	\item Each $\varphi_{1,\alpha}$ and $\varphi_{2,\alpha}$ is a convex combination of finitely many pure states. 
  	\item The nets $(\varphi_{1,\alpha})\alpha$ and $(\varphi_{2,\alpha})\alpha$ converge in the weak$^*$-topology to $\varphi_1$ and $\varphi_2$ on the corresponding subalgebras, respectively. 
  	\end{enumerate}
By the results in the previous part,  we can find a net of states $(\varphi_\alpha)_\alpha$ in $F_m$ such that, for each $\alpha$, we have 
\[ \varphi_\alpha(a)= \varphi_\alpha(1_{\mathscr{A}_1}) \varphi_{1,\alpha}(a) \qquad a\in \mathscr{A}_1,                                       \] 

\[ \varphi_\alpha(a)= \varphi_\alpha(1_{\mathscr{A}_2}) \varphi_{2,\alpha}(a) \qquad a\in \mathscr{A}_2,                                        \] 

and 

\[  m\le \varphi_\alpha(1_{\mathscr{A}_1}), \varphi_\alpha(1_{\mathscr{A}_2})\le 1.    \]

Now using the compactness of  $F_m$ and $[m,1]$ and  taking subnets, we can find  a state $\varphi$ on $\mathscr A$  that is in $F_m$ that restricts to  multiples of $\varphi_1$ and $\varphi_2$ on $\mathscr{A}_1$ and  $\mathscr{A}_2$, respectively.\\

It remains to show that (4) implies (1).  Suppose (4) holds, and take now  states $\varphi_1$ and $\varphi_2$ on $\mathscr{A}_1$ and  $\mathscr{A}_2$, respectively. From (4), there is a state $\varphi$ on $\mathscr{A}$ such that 
\[ \varphi(a)= \varphi(1_{\mathscr{A}_1}) \varphi_{1}(a)  \qquad a\in \mathscr{A}_1,                                        \] 

\[ \varphi(a)= \varphi(1_{\mathscr{A}_2}) \varphi_{2}(a)  \qquad a\in \mathscr{A}_2,                                         \] 
where $1\ge \varphi(1_{\mathscr{A}_1}), \varphi(1_{\mathscr{A}_2})\ge m>0.$ This gives 
   \[ m\varphi_1(|x|)\le \varphi(1_{\mathscr{A}_1}) \varphi_1(|x|)=\varphi(|x|)\le \varphi_1(|x|)\le  \varphi_1(|x|^2)^{\frac{1}{2}}\,,   \qquad x\in \mathscr{A}_1.\]

The same inequality holds for state $\varphi_2$.
\end{proof}
Even if the module independence  is close to $C^*$-independence, as the following simple  example shows, it is strictly weaker.\\
\begin{example}\label{EX10}
	Let $\mathscr{A}$ be the algebra of two-by-two complex matrices. Take two orthogonal one-dimensional projections $p_1$ and $p_2$ in $\mathscr{A}$. Consider subalgebras $\mathscr{A}_1=p_1\mathscr{A}p_1$ 
and 	$\mathscr{A}_2=p_2\mathscr{A}p_2$. Let us take states $\varphi_1$ and $\varphi_2$ on $\mathscr{A}_1$ and $\mathscr{A}_2$, respectively. (There is  only one state on each algebra.) Then there is no common state extension of $\varphi_1$ and $\varphi_2$ over $\mathscr{A}$,  as this extension would assign value  $2=\varphi_1(p_1)+\varphi_2(p_2)$ to the unit of $\mathscr{A}$, which is not possible. Therefore $\mathscr{A}_1$ and $\mathscr{A}_2$ are not $C^*$-independent. However the normalized trace $\tau$ is a state on $\mathscr{A}$  whose restriction to $\mathscr{A}_1$ and $\mathscr{A}_2$ is $\frac{1}{2}$ multiple of the states $\varphi_1$ and $\varphi_2$, respectively. Thus the module independence holds with $m=\frac{1}{2}$. 
\end{example}
However, when considering the configuration in which all algebras in question share common unit, the module  independence and $C^*$-independence coincide. Indeed,  suppose that $\mathscr{A}_1$ and $\mathscr{A}_2$ are $C^*$-subalgebras of $\mathscr{A}$ having the unit 1 of $\mathscr{A}$. Suppose that  $\mathscr{A}_1$ and $\mathscr{A}_2$ are module independent. Thanks to our assumption,  any state $\varphi$ on $\mathscr{A}$ that restricts to a multiple of a state $\varphi_1$ on $\mathscr{A}_1$ must in fact be equal to $\varphi_1$. Therefore condition (4) in Theorem~\ref{I1} is equivalent to $C^*$-independence. We have therefore the following consequence giving a new characterization of $C^*$-independence. 
\begin{corollary}\label{I2}
	Let $\mathscr{A}_1$ and  $\mathscr{A}_2$ be unital
	$C^*$-subalgebras of a unital $C^*$-algebra  
	$\mathscr{A}$. Let us suppose that all algebras have the same unit. Then the following statements are equivalent:
	\begin{enumerate}
		\item  $\mathscr{A}_1$ and  $\mathscr{A}_2$ are module independent in 
		$\mathscr{A}$. 
		\item For each couple of states $\varphi_1\in S(\mathscr A_1)$ and  
		$\varphi_2\in S(\mathscr{A_2})$, there is a state $\varphi\in S(\mathscr A)$  with 
		
		\[ \mathscr{N}_{\varphi|\mathscr{A}_1} =        \mathscr{N}_{\varphi_1}            \] 
		and 
		\[ \mathscr{N}_{\varphi|\mathscr{A}_2} =        \mathscr{N}_{\varphi_2}   . \] 
		
		\item $\mathscr{A}_1$ and $\mathscr{A}_2$ are $C^*$-independent.  
\end{enumerate} 
\end{corollary}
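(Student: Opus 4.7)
The plan is to reduce everything to Theorem~\ref{I1}, exploiting the hypothesis that $1_{\mathscr{A}_1}=1_{\mathscr{A}_2}=1_{\mathscr{A}}$. The key observation, already flagged in the paragraph preceding the corollary, is that under this hypothesis every state $\varphi\in S(\mathscr{A})$ automatically satisfies $\varphi(1_{\mathscr{A}_i})=1$, so the lower bound $\varphi(1_{\mathscr{A}_i})\ge m$ from Theorem~\ref{I1} (2)--(4) is free; moreover, if $\varphi|_{\mathscr{A}_i}=\lambda_i\varphi_i$, then $1=\varphi(1)=\lambda_i\varphi_i(1)=\lambda_i$, so the scalar multipliers in condition (4) of Theorem~\ref{I1} collapse to $1$ and the restriction becomes a genuine extension.

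For $(1)\Rightarrow(2)$ I would invoke the implication $(1)\Rightarrow(2)$ of Theorem~\ref{I1}: module independence produces, for each pair $(\varphi_1,\varphi_2)$, a state $\varphi$ matching the prescribed left kernels, and the auxiliary constraint on $\varphi(1_{\mathscr{A}_i})$ is absorbed by the observation above. For $(3)\Rightarrow(1)$ I would take a joint state extension $\varphi$ guaranteed by $C^*$-independence and, for $x\in\mathscr{A}_i$, note that $\varphi(|x|)=\varphi_i(|x|)\le\varphi_i(|x|^2)^{1/2}$ by the Kadison inequality \eqref{kad}; thus \eqref{IN1} holds with $m=M=1$.

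The substantive implication is $(2)\Rightarrow(3)$, which essentially replays the $(2)\Rightarrow(3)\Rightarrow(4)$ chain of Theorem~\ref{I1} and then applies the scalar-collapse observation. First, for pure $\varphi_1,\varphi_2$, the identity $\ker\varphi_i=\mathscr{N}_{\varphi_i}+\mathscr{N}^*_{\varphi_i}$ together with the kernel condition in (2) yields $\ker\varphi_i\subseteq\ker(\varphi|_{\mathscr{A}_i})$, so $\varphi|_{\mathscr{A}_i}=\lambda_i\varphi_i$; by the common-unit argument $\lambda_i=1$, and $\varphi$ is a genuine joint extension of the two pure states. Next, I extend to finite convex combinations of pure states by the two-step averaging argument used in the proof of Theorem~\ref{I1}: a convex combination of joint extensions of components is a joint extension of the corresponding convex combinations. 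Finally, for arbitrary states I approximate $\varphi_1,\varphi_2$ in the weak$^*$ topology by finite convex combinations of pure states via the Krein--Milman theorem, obtain joint extensions along these nets, and use weak$^*$-compactness of $S(\mathscr{A})$ to extract a limit state $\varphi$ restricting to $\varphi_1$ and $\varphi_2$.

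I anticipate that the main obstacle is purely bookkeeping: one must verify at each of the three stages (pure, finite-convex, weak$^*$-limit) that the multipliers returned by the machinery of Theorem~\ref{I1} are indeed trivialized by the common-unit hypothesis, so that the output is a true joint extension rather than a scaled one, and that the weak$^*$-limit preserves the exact extension property rather than merely an inequality.
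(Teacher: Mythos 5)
Your proposal is correct and follows essentially the same route as the paper: the paper derives the corollary from Theorem~\ref{I1} together with the observation that the common unit forces $\varphi(1_{\mathscr{A}_i})=1$, so the scalar multiples in conditions (2)--(4) of that theorem collapse to $1$ and condition (4) becomes literally $C^*$-independence. Your version merely spells out the $(2)\Rightarrow(3)\Rightarrow(4)$ chain again instead of citing it, but the content is identical.
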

Now we shall generalize the above concepts to the context of $C^*$-modulus. We start with the natural definition of ternary space.
\begin{definition}
Let $\mathscr{E}$ be a Hilbert $\mathscr{A}$ module. A closed subspace $\mathscr{F}$ of $\mathscr{E}$ is said to be \emph{ternary} if $\mathscr{F}\langle\mathscr{F},\mathscr{F}\rangle \subseteq \mathscr{F}$. Then $\langle\mathscr{F},\mathscr{F}\rangle\,\langle\mathscr{F},\mathscr{F}\rangle\subseteq \langle\mathscr{F},\mathscr{F}\rangle$ and so $\langle\mathscr{F},\mathscr{F}\rangle$ is a $C^*$-subalgebra of $\mathscr{A}$ and $\mathscr{F}$ is a Hilbert $\langle\mathscr{F},\mathscr{F}\rangle$-module.
\end{definition}

For example, in the case that $\mathscr{E}$ is a $C^*$-algebra, $C^*$-subalgebras are ternary spaces. The same is true for closed subspaces of a Hilbert space as a Hilbert $\mathbb{C}$-module.

The following proposition gives a sufficient and necessary condition in order that $\langle\mathscr{F},\mathscr{F}\rangle$ is a $C^*$-algebra.
\begin{proposition}
Let $\mathscr{A}$ be a $C^*$-algebra and let $\mathscr{E}$ be a Hilbert $C^*$-module. Let $\mathscr F $ be a closed ternary subspace of $\mathscr{E}$.
 Then $\langle \mathscr F,\mathscr F\rangle$ is a $C^*$-subalgebra of $\mathscr{A}$ and moreover 
$\mathscr F=\overline{\mathscr F\langle \mathscr F,\mathscr F\rangle}$.
\end{proposition}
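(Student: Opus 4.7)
The plan is to establish the two assertions in turn. For the $C^*$-algebra claim, the only substantive point is closure under multiplication, since self-adjointness of $\langle\mathscr{F},\mathscr{F}\rangle$ follows from $\langle x,y\rangle^{*}=\langle y,x\rangle$ and norm-closure is built into the definition. On generators I would write
\[
\langle x,y\rangle\,\langle u,v\rangle \;=\; \langle x,\; y\cdot\langle u,v\rangle\rangle
\]
and invoke the ternary hypothesis $\mathscr{F}\langle\mathscr{F},\mathscr{F}\rangle\subseteq\mathscr{F}$ to conclude $y\cdot\langle u,v\rangle\in\mathscr{F}$, so that the product lies in $\langle\mathscr{F},\mathscr{F}\rangle$. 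Joint continuity of multiplication, combined with the fact that $\mathscr F$ is norm-closed and module action is norm-continuous, then extends this to arbitrary elements of the closed linear span.

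For the equality $\mathscr{F}=\overline{\mathscr{F}\langle\mathscr{F},\mathscr{F}\rangle}$, the inclusion $\supseteq$ is immediate from the ternary property and closedness of $\mathscr{F}$. For the reverse inclusion I would fix $x\in\mathscr{F}$ and set $a:=\langle x,x\rangle\in\langle\mathscr{F},\mathscr{F}\rangle$. Since $\langle\mathscr{F},\mathscr{F}\rangle$ is a $C^*$-algebra (just proved), continuous functional calculus applied to $a\ge 0$ produces $a^{1/n}\in\langle\mathscr{F},\mathscr{F}\rangle$, and therefore $x a^{1/n}\in\mathscr{F}\langle\mathscr{F},\mathscr{F}\rangle$. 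A direct expansion of inner products yields
\[
\langle x-x a^{1/n},\; x-x a^{1/n}\rangle \;=\; a - 2 a^{1+1/n} + a^{1+2/n},
\]
whose norm, computed inside the commutative $C^{*}$-subalgebra $C^{*}(a)$, equals $\sup_{t\in\sigma(a)} t(1-t^{1/n})^{2}$. A routine splitting of $[0,\|a\|]$ at a small $\varepsilon>0$ (on $[0,\varepsilon]$ the integrand is bounded by $\varepsilon$, on $[\varepsilon,\|a\|]$ by $(1-\varepsilon^{1/n})^{2}\|a\|\to 0$) shows this supremum tends to $0$, so $x a^{1/n}\to x$, and hence $x\in\overline{\mathscr{F}\langle\mathscr{F},\mathscr{F}\rangle}$.

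The one delicate point is that $\langle\mathscr{F},\mathscr{F}\rangle$ is typically non-unital, so the symbolic factor $(1-a^{1/n})$ lives only in the unitization; however, the actual element $a - 2 a^{1+1/n} + a^{1+2/n}$ already lies in $C^{*}(a)\subseteq\langle\mathscr{F},\mathscr{F}\rangle$, so the estimate is intrinsic. Equivalently, one could invoke the standard fact that for any Hilbert $\mathscr{B}$-module and any approximate unit $(e_{\lambda})$ of $\mathscr{B}$ one has $x e_{\lambda}\to x$; applied with $\mathscr{B}=\langle\mathscr{F},\mathscr{F}\rangle$, this packages the computation and sidesteps the bookkeeping. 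I expect no serious obstacle: the entire content is that $t\mapsto t^{1/n}$ approximates the identity uniformly on compact subsets of $(0,\infty)$ and the weight $t$ kills the bad behaviour near $0$.
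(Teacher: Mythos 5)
Your proof is correct and follows essentially the same route as the paper's: the same identity $\langle x,y\rangle\langle u,v\rangle=\langle x,y\langle u,v\rangle\rangle$ for closure under multiplication, and the same expansion of $\langle xe-x,\,xe-x\rangle$ for the density claim, with the paper using a general approximate unit $(u_i)$ of $\langle\mathscr F,\mathscr F\rangle$ where you use the concrete sequence $e=\langle x,x\rangle^{1/n}$ (and you yourself note the approximate-unit packaging as equivalent). The only cosmetic slip is the bound $(1-\varepsilon^{1/n})^{2}\|a\|$ on $[\varepsilon,\|a\|]$, which as stated presumes $\|a\|\le 1$; the correct uniform statement you give afterwards ($t^{1/n}\to 1$ uniformly on compact subsets of $(0,\infty)$) covers the general case.
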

\begin{proof}
Let $x_i,y_i\in \mathscr F$ for $i=1, 2$. It follows from 
$
\langle x_1,y_2\rangle\langle x_2,y_2\rangle=\langle x_1,y_1\langle x_2,y_2\rangle\rangle\in \langle \mathscr F,\mathscr F\rangle 
$ 
that $\mathscr{F}$ is closed under the multiplication. Evidently, $\langle \mathscr F,\mathscr F\rangle$ is closed under the involution, so it is a $C^*$-subalgebra of $\mathscr{A}$.

Let now $(u_i)$ be an approximate unit for it. It is enough to show that $ \mathscr F\langle \mathscr F,\mathscr F\rangle$ is dense in $\mathscr F$. Suppose $x\in \mathscr F$. We have 
\begin{align*}
\langle xu_i-x,xu_i-x\rangle=u_i\langle x,x\rangle u_i-\langle x,x\rangle u_i-u_i\langle x,x\rangle+\langle x,x\rangle\to 0.
\end{align*}
Hence $\lim_i xu_i =x$, as desired.
\end{proof}
Now we introduce our key concept of Hilbert $C^*$-module independence.
\begin{definition}\label{Mi}
Let $\mathscr{A}$ be a unital $C^*$-algebra. Let $\mathscr{E}_i\subseteq \mathscr{E},\,\,i=1, 2$, be ternary spaces. Then $\mathscr{E}_1$ and $\mathscr{E}_2$ are said to be \emph{Hilbert $C^*$-module independent}, in short \emph{module independent}, if there are positive constants $m$ and $M$ such that for every $\varphi_i\in S(\mathscr \langle \mathscr{E}_i,\mathscr{E}_i\rangle),\,\,i=1, 2$, there exists a state $\varphi\in S(\mathscr{A})$ such that the following module independency condition holds:
\begin{align}\label{Hil1}
m\varphi_i(|x|)\leq \varphi(|x|) \leq M\varphi_i(|x|^2)^{\frac{1}{2}},\qquad \mbox{for all~}x\in \mathscr{E}_i, i=1, 2.
\end{align}
\end{definition}
The following example shows that the above definition is not equivalent to the condition 
\[\|\langle x,y\rangle\|=\|x\|\,\|y\|,\qquad x\in \mathscr{E}_1, y\in \mathscr{E}_2,\]
which is the counterpart of the condition $\|ab\|=\|a\|\,\|b\|$ in the definition of $C^*$-independence.

\begin{example}
Let $(\mathscr{H},\langle\cdot,\cdot\rangle)$ be the complex Hilbert space and let $\mathscr K$ be a nontrivial closed subspace of $\mathscr H$. Let  $\mathscr{E}_1=\mathscr{K}$  and let $\mathscr E_2=\mathscr K^\perp$.

Since $\langle\mathscr E_1,\mathscr E_1\rangle=\langle \mathscr E_2,\mathscr E_2\rangle=\mathbb{C}$, it is clear that $\mathscr{E}_1$ and $\mathscr{E}_2$ are module independent with $m=M=1$. However, if $x\in \mathscr E_1$ and $y\in \mathscr E_2$ are nonzero, then $\|\langle x,y\rangle\|=0\neq \|x\|\|y\|$.
\end{example}
Our next result shows that the roles of states $\varphi_1$ and $\varphi_2$ in the definition above can be played by pure states. To achieve our goal, we mimic some techniques in \cite{HAM2}.

\begin{theorem}\label{thpure}
Let $\mathscr{E}_1$ and $\mathscr{E}_2$ be ternary spaces of $\mathscr{E}$.
Then $\mathscr{E}_1$ and $\mathscr{E}_2$ are module independent if and only if for every pure states $\varphi_1$ and $\varphi_2$ on $\mathscr \langle \mathscr{E}_1,\mathscr{E}_1\rangle$ and $\langle \mathscr{E}_2,\mathscr{E}_2\rangle$, respectively, there is a state $\varphi$ on $\mathscr{A}$ such that \eqref{Hil1} holds for some fixed constants $m$ and $M$.
\end{theorem}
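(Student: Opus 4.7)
The forward implication is immediate, since pure states are states, so the main content is the converse. My plan is to mirror the scaffolding of the proof of Theorem \ref{I1}: start from the pure-state hypothesis, promote it to convex combinations of pure states by taking convex combinations of the witnessing states on $\mathscr{A}$, and then pass to arbitrary states using Krein--Milman and weak$^*$-compactness of $\mathcal{S}(\mathscr{A})$. The key observation that makes this work in the module setting is that both sides of \eqref{Hil1} behave well under mixing: the left side $\varphi \mapsto \varphi(|x|)$ and the middle term are affine in the state, while the right side $\varphi_i \mapsto M\varphi_i(|x|^2)^{1/2}$ is concave in $\varphi_i$ because $t\mapsto\sqrt{t}$ is concave, so Jensen's inequality goes in the correct direction.

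Concretely, assume the pure-state version holds with constants $m,M$. First, fix a pure state $\varphi_2$ on $\langle\mathscr{E}_2,\mathscr{E}_2\rangle$, take $\varphi_1=\sum_{i=1}^n\alpha_i\psi_{1,i}$ with $\psi_{1,i}$ pure, $\alpha_i\ge0$, $\sum\alpha_i=1$, and for each $i$ pick a state $\varphi^{(i)}\in\mathcal{S}(\mathscr{A})$ witnessing \eqref{Hil1} for the pair $(\psi_{1,i},\varphi_2)$. Setting $\varphi=\sum_i\alpha_i\varphi^{(i)}$, linearity gives $\varphi(|x|)\ge m\varphi_1(|x|)$ for $x\in\mathscr{E}_1$ and (by Jensen applied to $\sqrt{\cdot}$)
\[\varphi(|x|)=\sum_i\alpha_i\varphi^{(i)}(|x|)\le M\sum_i\alpha_i\psi_{1,i}(|x|^2)^{1/2}\le M\Bigl(\sum_i\alpha_i\psi_{1,i}(|x|^2)\Bigr)^{1/2}=M\varphi_1(|x|^2)^{1/2}.\]
The inequalities for $\varphi_2$ hold directly because every $\varphi^{(i)}$ already satisfies them and they are affine in $\varphi^{(i)}$ for fixed $\varphi_2$. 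Exchanging the roles, a second convex combination step handles $\varphi_2=\sum_j\beta_j\psi_{2,j}$ with $\varphi_1$ still a finite convex combination of pure states.

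For the general case, by Krein--Milman choose nets $(\varphi_{1,\lambda})$ and $(\varphi_{2,\lambda})$ of finite convex combinations of pure states on $\langle\mathscr{E}_1,\mathscr{E}_1\rangle$ and $\langle\mathscr{E}_2,\mathscr{E}_2\rangle$, converging weakly$^*$ to $\varphi_1$ and $\varphi_2$ respectively. By the previous step, pick $\varphi_\lambda\in\mathcal{S}(\mathscr{A})$ satisfying \eqref{Hil1} against $(\varphi_{1,\lambda},\varphi_{2,\lambda})$. Since $\mathcal{S}(\mathscr{A})$ is weak$^*$-compact, pass to a weak$^*$-convergent subnet $\varphi_\lambda\to\varphi$. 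For each fixed $x\in\mathscr{E}_i$, the element $|x|$ and $|x|^2$ lie in $\langle\mathscr{E}_i,\mathscr{E}_i\rangle\subseteq\mathscr{A}$, so the functionals $\psi\mapsto\psi(|x|)$ and $\psi\mapsto\psi(|x|^2)$ are weak$^*$-continuous. Passing to the limit in
\[m\varphi_{i,\lambda}(|x|)\le\varphi_\lambda(|x|)\le M\varphi_{i,\lambda}(|x|^2)^{1/2}\]
and using continuity of $\sqrt{\cdot}$ yields \eqref{Hil1} for $(\varphi_1,\varphi_2)$ with the same constants $m,M$.

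The routine portion is the Krein--Milman/compactness bookkeeping, which is essentially a rerun of the corresponding step in Theorem \ref{I1}. The one place that requires care, and the main conceptual obstacle, is the upper inequality: because it features $\varphi_i(|x|^2)^{1/2}$ rather than $\varphi_i(|x|)$, convex combinations of witnessing states do not automatically respect it. This is resolved by the concavity of the square root via Jensen, which actually \emph{improves} the bound when mixing, so the constant $M$ need not be enlarged; the constant $m$ also transfers without change.
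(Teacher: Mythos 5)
Your proposal is correct and follows essentially the same route as the paper: reduce to finite convex combinations of pure states by taking convex combinations of the witnessing states (using affinity for the lower bound and concavity of $t\mapsto t^{1/2}$ via Jensen for the upper bound), then pass to arbitrary states by Krein--Milman and weak$^*$-compactness of $\mathcal{S}(\mathscr{A})$. The only cosmetic difference is that you iterate the convex-combination step twice while the paper uses a single doubly-indexed combination $\sum_{i,j}\alpha_i\beta_j\rho_{ij}$; the substance is identical.
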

\begin{proof}
($\Longrightarrow$) Clear.\\
($\Longleftarrow$) For $i=1, 2$, let $\varphi_i$ be a state on $\mathscr{A}_i$, where $\mathscr{A}_i=\langle \mathscr{E}_i,\mathscr{E}_i\rangle$. We first assume that $\varphi_1$ and $\varphi_2$ are convex combinations of pure states of the form
\[
\varphi_1=\sum_{i=1}^n\alpha_i\psi_{1,i} \qquad \mbox{and}\qquad \quad\varphi_2=\sum_{j=1}^m\beta_j\psi_{2,j},
\]
where $\sum_{i=1}^n\alpha_i=1,\sum_{j=1}^m\beta_j=1,\alpha_i\geq0,\beta_j\geq 0$, and $\psi_{1,i}$ and $\psi_{2,j}$ are pure states for all $i=1, \ldots, n$ and $j=1, \ldots, m$. Without loss of generality, we assume $\alpha_i\neq0\neq \beta_j$ for all $i,j$. By the hypothesis, there are states $\rho_{ij}$ such that 
\begin{equation}\label{pure1}
m\psi_{1,i}(|x|) \leq \rho_{ij}(|x|)\leq M(\psi_{1,i}(|x|^2)^{\frac{1}{2}},\qquad x\in \mathscr{E}_1,
\end{equation}
\begin{equation}\label{pure2}
m\psi_{2,j}(|x|) \leq \rho_{ij} (|x|) \leq M(\psi_{2,j}(|x|^2)^{\frac{1}{2}}, \qquad x\in \mathscr{E}_2.
\end{equation}
Set
$
\varphi:=\sum_{i=1}^n\sum_{j=1}^m\alpha_i\beta_j{\rho_{ij}}.
$
We show that $\varphi$ is the desired state  in the  case considered. For $x\in \mathscr{E}_1$, it follows from \eqref{pure1} that
\begin{align*} 
\varphi(|x|) &=\sum_{i=1}^n\sum_{j=1}^m\alpha_i\beta_j\rho_{ij}(|x|)\nonumber\\
&\leq M\sum_{i=1}^n\sum_{j=1}^m\alpha_i\beta_j\psi_{1,i}(|x|^2)^{\frac{1}{2}}\nonumber\\
&= M\sum_{i=1}^n\alpha_i\psi_{1,i}(|x|^2)^{\frac{1}{2}}\nonumber\\
&\leq M\left(\sum_{i=1}^n\alpha_i\psi_{1,i}(|x|^2)\right)^{\frac{1}{2}}\quad ({\rm by~the~concavity~of~the~real~function~} f(t)=t^{\frac{1}{2}})\nonumber\\
&=\varphi_1(|x|^2)^{\frac{1}{2}}.
\end{align*}
Also, from \eqref{pure2}, we get
\begin{align*} 
\varphi(|x|)=\sum_{i=1}^n\sum_{j=1}^m\alpha_i\beta_j\rho_{ij}(|x|)\geq m\sum_{i=1}^n\sum_{j=1}^m\alpha_i\beta_j\psi_{1,i}(|x|)= m\sum_{i=1}^n\alpha_i\psi_{1,i}(|x|)\nonumber=m\varphi_1(|x|).
\end{align*}
Following the same arguments for $\varphi$ and $\varphi_2$, we observe that the module independency condition \eqref{Hil1} holds.

Now suppose that the $\varphi_1$ and $\varphi_2$ are arbitrary sates on $\mathscr{A}_1$ and $\mathscr{A}_2$. Since the pure states are the extreme points of $S(\mathscr{A}_i)$, the Krein--Milman theorem implies that there are nets $(\varphi_\alpha)$
and $(\varphi_\beta)$ consisting of convex combinations of pure states of $\mathscr{A}_i$ such that $\varphi_1$ and $\varphi_2$ are weak$^*$-limits of $(\varphi_\alpha) $ and $(\varphi_\beta)$, respectively. 

By the first part of the proof, assume that $\varphi_{\alpha\beta}$ is such that 
\begin{equation*} 
m\varphi_\alpha(|x|) \leq \varphi_{\alpha\beta} (|x|) \leq M\varphi_\alpha(|x|^2)^{\frac{1}{2}},\qquad x\in \mathscr{E}_1,
\end{equation*}
and
\begin{equation*} 
m\varphi_\beta(|x|) \leq \varphi_{\alpha\beta} (|x|) \leq M\varphi_\beta(|x|^2)^{\frac{1}{2}}, \qquad x\in \mathscr{E}_2.
\end{equation*}
By
the compactness of the state space of $\mathscr{A}$, we obtain a weak$^*$-limit point $\varphi$ of the net $(\varphi_{\alpha\beta})$. We show that $\varphi$ is the desired state. In fact, for any $x\in \mathscr{E}_1$, we have 
\begin{align*}
m\varphi_1(|x|)&=m\lim_\alpha \varphi_\alpha(|x|) \leq \lim_{\alpha,\beta} \varphi_{\alpha\beta}(|x|)=\varphi(|x|)\\&=\lim_{\alpha,\beta} \varphi_{\alpha\beta}(|x|)\leq M\lim_\alpha(\varphi_\alpha(|x|^2)^{\frac{1}{2}}=M(\varphi_1(|x|^2)^{\frac{1}{2}},
\end{align*}
where the limits are taken in the weak$^*$-topology. In a similar manner, we get 
\[
m\varphi_2(|x|)\leq \varphi(|x|)\leq M\varphi_2(|x|^2)^{\frac{1}{2}}.
\]
\end{proof}
\begin{remark}
Let $\varphi_1\in S(\mathscr{A}_1)$ be a pure state and let $\varphi\in S(\mathscr{A})$ satisfy  \eqref{Hil1}. We claim that $\varphi$ is an extension of $\varphi_1$.
We set $\psi:=\varphi|_{\mathscr{A}_1}$ and prove that $\varphi_1=\psi$.
We first show that
\begin{equation}\label{nphi}
\mathcal{N}_{\varphi_1}=\mathcal{N}_{\psi}.
\end{equation}
To this end, let $b\in \mathcal{N}_{\varphi_1}$. Hence $\varphi_1(|b|^2)=0$. From \eqref{Hil1}, we infer that $\varphi(|b|)=0$. 
Therefore,
\[
\psi(b^*b)=\varphi(b^*b)=\varphi(|b|^2)\leq \|b\|\varphi(|b|)=0,
\]
whence $b\in \mathcal{N}_{\psi}$. \\
For the reverse inclusion, let $b\in \mathcal{N}_{\psi}$. Then $\varphi(b^*b)=\psi(b^*b)=0$. From \eqref{Hil1}, we conclude that $\varphi_1(b^*b)=0$. Hence $b\in \mathcal{N}_{\varphi_1}$.\\

Next, we note that $\ker(\varphi_1)=\mathcal{N}_{\varphi_1}+\mathcal{N}_{\varphi_1}^*$, since $\varphi_1$ is a pure state; see \cite[Theorem 5.3.4]{R}. Using \eqref{nphi}, we get 
\begin{equation}\label{ker}
\ker (\varphi_1)=\mathcal{N}_{\psi}+\mathcal{N}_{\psi}^*\subseteq \ker(\psi).
\end{equation}
To prove the inclusion in \eqref{ker}, assume that $a,b\in \mathcal{N}_{\psi}$. In virtue of \eqref{cs}, we have 
\begin{align*}
\psi\left((a+b^*)^*(a+b^*)\right)=\psi(a^*a)+\psi(a^*b^*)+\psi(ba^*)+\psi(bb^*)=0,
\end{align*}
whence, by \eqref{choi}, we get $\psi(a+b^*)=0$.\\
Now, \eqref{ker} ensures that $\psi=\alpha \varphi_1$. Since $\psi(1)=\varphi_1(1)=1$, we get $\alpha=1$ as desired. 
\end{remark}
The next example shows that the above fact is not true for an arbitrary state $\varphi_1$.
\begin{example}
Let $\mathscr{A}=\mathscr{A}_1=\mathscr{A}_2=\mathbb{C}[0,1]$ and let $g:[0,1]\to \mathbb{R}^+$ be a nonconstant function (not necessary continuous) such that $g(t)\geq \frac{1}{2}$ and $\int_0^1g(t)dt=1$. Set
\[
m:=\frac{1}{2}\quad \mbox{and}\quad  M:=\left(\int_0^1g(t)^2dt\right)^{\frac{1}{2}}.
\]
Define the state $\varphi$ on $\mathscr{A}$ by 
\[
\varphi(f)=\int_0^1 g(t)f(t)dt.
\] 
Then $\varphi_1(f)=\int_0^1f(t)dt$ is a state on $\mathscr{A}$, and we have
\begin{align*}
\frac{1}{2}\varphi_1(|f|)&=\frac{1}{2}\int_0^1 |f(t)|dt\leq \int_0^1 g(t)|f(t)|dt\\
&\leq (\int_0^1g(t)^2)^\frac{1}{2}(\int_0^1|f(t)|^2)^\frac{1}{2}=M\varphi_1(|f|^2)^{\frac{1}{2}}.
\end{align*}
It is easy to see that $\varphi\neq \varphi_1$.
\end{example}
It is shown in \cite[p. 343]{HAM3} that if $\mathscr{A}_1$ and $\mathscr{A}_2$ are $C^*$-independent, then $\mathscr{A}_1\cap \mathscr{A}_2= \mathbb{C}1$. Let us provide a different proof. Let $d\in \mathscr{A}_1\cap \mathscr{A}_2$ be a positive norm-one element. By the equivalence form of $C^*$-independence due to Florig and Summers \cite{FLS} (see Introduction), we have $\|dc\|=\|d\|\,\|c\|=\|1\|\,\|c\|=\|1c\|$ for all $c\in \mathscr{A}_1\cap \mathscr{A}_2$. It follows from \cite[Lemma 3.4]{LAN} that $d=1$, whence we reach the required result.\\
The next two results extend the above fact to the content of Hilbert $C^*$-modules. 

\begin{proposition}\label{exte2}
Let $\mathscr{E}_1$ and $\mathscr{E}_2$ be module independent and let $\mathscr{E}_1\cap \mathscr{E}_2$ be a ternary space.  Then, $\langle\mathscr E_1\cap \mathscr E_2, \mathscr E_1\cap \mathscr E_2\rangle=\mathbb{C}|z|$ for some $z\in \mathscr{E}_1\cap \mathscr{E}_2$ with $\|z\|=1$.
\end{proposition}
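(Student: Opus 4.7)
The plan is to show that $\mathscr{B}:=\langle\mathscr{F},\mathscr{F}\rangle$, where $\mathscr{F}:=\mathscr{E}_1\cap\mathscr{E}_2$, is one-dimensional, spanned by a positive element of the form $|z|$ with $z\in\mathscr{F}$ and $\|z\|=1$. Assuming $\mathscr{F}\neq\{0\}$ (otherwise the conclusion is vacuous), I fix $z\in\mathscr{F}$ with $\|z\|=1$ and set $p:=|z|\in\mathscr{B}$. Since by the polarization identity $\mathscr{B}$ is the closed $\mathbb{C}$-linear span of $\{|x|^2:x\in\mathscr{F}\}$, it suffices to establish the pointwise identity $|x|^2=\|x\|^2 p$ for every $x\in\mathscr{F}$. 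Indeed, specializing to $x=z$ this forces $|z|^2=|z|$, making $p$ automatically a projection, and the identity then yields $\mathscr{B}=\mathbb{C}p=\mathbb{C}|z|$.

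To attack the key identity I would invoke Theorem~\ref{thpure}, the pure-state form of module independence. Given any unit $x\in\mathscr{F}$, choose pure states $\psi_1,\rho_2$ on $\mathscr{B}$ with $\psi_1(|x|^2)=1$ and $\rho_2(|z|^2)=1$, which exist since $\||x|^2\|=\||z|^2\|=1$. Extend each to a pure state on the corresponding outer algebra $\langle\mathscr{E}_i,\mathscr{E}_i\rangle$, and feed the pair into Theorem~\ref{thpure} to produce $\varphi\in S(\mathscr{A})$ obeying the module-independence sandwich for both. Applying the sandwich on both sides to $u\in\{x,z\}\subseteq\mathscr{F}$, and using property~(iv) from the introduction (so that $\psi_1(|x|)=\rho_2(|z|)=1$), a short calculation should give
\[
\rho_2(|x|^2)\geq (m/M)^2\quad\text{and}\quad \psi_1(|z|^2)\geq (m/M)^2,
\]
i.e., no pair of pure states on $\mathscr{B}$ can sharply separate $|x|^2$ from $|z|^2$.

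The main obstacle is to upgrade this weak non-separation into the exact equality $|x|^2=p$. I expect to exploit the ternary structure of $\mathscr{F}$: the relation $|x_0 b|^2=b^*|x_0|^2 b$ (for a fixed unit $x_0\in\mathscr{F}$ and $b\in\mathscr{B}$) identifies $\{|w|^2:w\in\mathscr{F}\}$ with a rich subset of the positive cone of the hereditary subalgebra of $\mathscr{B}$ generated by $|x_0|^2$. Combined with Kadison's transitivity theorem, this should yield---if $\mathscr{B}$ admits two distinct pure states---some $w\in\mathscr{F}$ separating them beyond the allowed ratio $(m/M)^2$, contradicting the inequality above. Hence $\mathscr{B}$ has at most one pure state, so $\mathscr{B}\cong\mathbb{C}$, and the polarization reduction of the first paragraph finishes the argument.
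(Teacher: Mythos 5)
Your reduction in the first paragraph is fine, and the quantitative estimate you extract from Theorem~\ref{thpure} is correct: for unit vectors $x,z\in\mathscr F:=\mathscr E_1\cap\mathscr E_2$ and pure states $\psi_1,\rho_2$ on $\langle\mathscr F,\mathscr F\rangle$ normalized at $|x|^2$ and $|z|^2$ respectively, property (iv) gives $\psi_1(|x|)=1$, and combining the lower bound of \eqref{Hil1} over $\mathscr E_1$ with the upper bound over $\mathscr E_2$ does yield $\rho_2(|x|^2)\ge (m/M)^2$. But the proof stops exactly where the work begins. The inequality $\rho_2(|x|^2)\ge(m/M)^2$ is far too weak to force $\langle\mathscr F,\mathscr F\rangle$ to be one-dimensional: for instance, a two-dimensional commutative algebra spanned by elements $|x|^2$ all attaining their norm at the same pure state satisfies your non-separation condition verbatim. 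So everything hinges on your last paragraph, which is only a plan ("I expect", "should yield"). Moreover, the tool you name there, Kadison transitivity, acts through irreducible representations and gives nothing when $\langle\mathscr F,\mathscr F\rangle$ is commutative (all irreducibles are one-dimensional); and even in the noncommutative case you must produce a separating element of the special form $|w|^2$ with $w\in\mathscr F$, not merely a positive element of the hereditary subalgebra. As it stands, the central step is missing.

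For comparison, the paper avoids any quantitative estimate and gets an exact identity directly. Fix $\varphi_1\in S(\langle\mathscr E_1,\mathscr E_1\rangle)$ with $\varphi_1(|z|^2)=1$; then $\varphi_1$ is definite, hence multiplicative, at $|z|^2$, and a direct expansion gives $\varphi_1\bigl(\bigl|\,x|z|^2-x\,\bigr|^2\bigr)=0$ for every $x\in\mathscr F$. The point is that $x|z|^2-x$ lies in the ternary space $\mathscr F\subseteq\mathscr E_1\cap\mathscr E_2$, so the sandwich \eqref{Hil1} transports this vanishing first to $\varphi\bigl(\bigl|\,x|z|^2-x\,\bigr|\bigr)=0$ and then to $\varphi_2\bigl(\bigl|\,x|z|^2-x\,\bigr|^2\bigr)=0$ for an \emph{arbitrary} state $\varphi_2$ on $\langle\mathscr E_2,\mathscr E_2\rangle$; Cauchy--Schwarz then gives $\varphi_2(|x|^2|z|^2)=\varphi_2(|x|^2)$ for all $\varphi_2$, hence $|x|^2|z|^2=|x|^2$ exactly. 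Thus $|z|^2$ is a unit for $\langle\mathscr F,\mathscr F\rangle$, uniqueness of units gives $|z|=|w|$ for all norm-one $z,w\in\mathscr F$, and the polarization argument you already have finishes the proof. If you want to salvage your route, the lesson is to test the independence inequality on a cleverly chosen \emph{element of the ternary space} (here $x|z|^2-x$) rather than on pure states of the inner-product algebra.
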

\begin{proof}
Let $\varphi_1\in S(\langle \mathscr E_1,\mathscr E_1\rangle)$ be such that $\varphi_1(|z|^2)=1$. Let $x\in \mathscr E_1\cap \mathscr E_2$ and  $\varphi_2\in S(\langle \mathscr E_2,\mathscr E_2\rangle)$ be arbitrary. We first show that $\varphi_2(|z|^2|x|^2)=\varphi_2(|x|^2)$. 

By the module independency, there is $\varphi\in S(\mathscr A)$ satisfying \eqref{Hil1}. We have $\varphi_1(\big|\,x|z|^2-x\,\big|^2)=0$. Indeed, it follows from the determinacy of $|z|^2$ that
\begin{align*}
\varphi_1(\big|\,x|z|^2-x\,\big|^2)=\varphi_1(|z|^2|x|^2|z|^2)-\varphi_1(|x|^2|z|^2)-\varphi_1(|z|^2|x|^2)+\varphi_1(|x|^2)=0.
\end{align*}
Utilizing \eqref{Hil1}, we get $\varphi(\big|\,x|z|^2-x\,\big|)=0$. Again, from \eqref{Hil1}, we get $\varphi_2(\big|\,x|z|^2-x\,\big|)=0$. This gives that $\varphi_2(\big|\,x|z|^2-x\,\big|^2)=0$. So
\begin{align*}
\varphi_2(|x|^2|z|^2-|x|^2)=\varphi_2(\langle x,x|z|^2-x\rangle)\leq \varphi_2(|x|^2)^{\frac{1}{2}}\varphi_2(\big|x|z|^2-x\big|^2)^{\frac{1}{2}}=0.
\end{align*}
Hence $\varphi_2(|x|^2|z|^2)=\varphi_2(|x|^2)$. Since, $\varphi_2$ is arbitrary, we get $|x|^2|z|^2=|x|^2$. Since $\langle\mathscr E_1\cap \mathscr E_2, \mathscr E_1\cap \mathscr E_2\rangle$ is a $C^*$-algebra, $|z|^2 a=a|z|^2 =a$ for all $a\in \langle\mathscr E_1\cap \mathscr E_2, \mathscr E_1\cap \mathscr E_2\rangle$. Thus for all norm-one elements $z, w \in  \mathscr E_1\cap \mathscr E_2$, we have $|z|=|w|$. Therefore, $\langle\mathscr E_1\cap \mathscr E_2, \mathscr E_1\cap \mathscr E_2\rangle=\mathbb{C}|z|$ for some $z\in \mathscr{E}_1\cap \mathscr{E}_2$ with $\|z\|=1$.
\end{proof}
We say that a $C^*$-algebra $\mathscr B$ has the quasi extension property relative to a containing $C^*$-algebra $\mathscr{A}$ if no pure state of $\mathscr{A}$ annihilates $\mathscr B$; see \cite{ARC}. 

\begin{proposition}\label{exte}
Let $\mathscr{E}_1$ and $\mathscr{E}_2$ be  module independent. Let either $\langle \mathscr{E}_1,\mathscr{E}_1\rangle $ or $\langle \mathscr{E}_2,\mathscr{E}_2\rangle$ have the quasi extension property relative to $\mathscr{A}$. If $z\in \mathscr{E}_1\cap \mathscr{E}_2$ with $\|z\|=1$, then $|z|=1$.
\end{proposition}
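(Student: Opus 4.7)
The plan is to show that $|z|$ is the identity element of $\mathscr{A}_1:=\langle \mathscr{E}_1,\mathscr{E}_1\rangle$, assuming without loss of generality that $\mathscr{A}_1$ has the quasi extension property; once this is achieved, $|z|$ is a projection in $\mathscr{A}$, and the quasi extension property rules out any proper projection.

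First observe that $|z|^2=\langle z,z\rangle \in \mathscr{A}_1\cap \mathscr{A}_2$, so by continuous functional calculus (using $\sqrt{0}=0$) we have $|z|\in \mathscr{A}_1\cap \mathscr{A}_2$ with $\||z|\|=1$. By item (iv) together with an extreme-point argument, I would fix a pure state $\varphi_2$ on $\mathscr{A}_2$ with $\varphi_2(|z|)=1$. For any pure state $\varphi_1$ on $\mathscr{A}_1$, Theorem~\ref{thpure} combined with the remark following it (which upgrades the inequalities \eqref{Hil1} to an honest extension whenever one side is a pure state) produces a state $\varphi$ on $\mathscr{A}$ with $\varphi|_{\mathscr{A}_i}=\varphi_i$ for $i=1,2$. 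Consequently $\varphi_1(|z|)=\varphi(|z|)=\varphi_2(|z|)=1$, and Krein--Milman extends the equality $\varphi_1(|z|)=1$ to \emph{every} state $\varphi_1$ of $\mathscr{A}_1$. Item (iv) applied inside $\mathscr{A}_1$ then says each such $\varphi_1$ is multiplicative at $|z|$, so $\varphi_1(|z|b-b)=\varphi_1(b|z|-b)=0$ for every $b\in\mathscr{A}_1$. Since states separate the points of a $C^*$-algebra, $|z|b=b|z|=b$ and $|z|$ is the unit of $\mathscr{A}_1$, in particular a projection in $\mathscr{A}$.

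To conclude, suppose $|z|\neq 1_\mathscr{A}$. Then $1-|z|$ is a nonzero projection in $\mathscr{A}$ and a second extreme-point argument yields a pure state $\rho$ of $\mathscr{A}$ with $\rho(1-|z|)=1$, whence $\rho(|z|)=\rho(|z|^2)=0$. Cauchy--Schwarz combined with $|z|a=a$ for $a\in \mathscr{A}_1$ gives $|\rho(a)|^2=|\rho(|z|a)|^2\leq \rho(|z|^2)\rho(a^*a)=0$, so $\rho$ annihilates $\mathscr{A}_1$, contradicting the quasi extension property. The linchpin of the entire argument is the upgrade from the inequalities \eqref{Hil1} to a genuine state extension for pure states (the remark following Theorem~\ref{thpure}); without it one could only conclude that $\varphi$ is a scalar multiple of $\varphi_i$ on $\mathscr{A}_i$, and the crucial transfer of the norm-attaining value $\varphi_2(|z|)=1$ from $\mathscr{A}_2$ to $\mathscr{A}_1$ would fail.
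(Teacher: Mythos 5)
There is a genuine gap at the step you yourself identify as the linchpin. The Remark following Theorem~\ref{thpure} does \emph{not} upgrade \eqref{Hil1} to an honest extension in the present setting: its kernel argument only yields $\varphi|_{\mathscr{A}_1}=\alpha\varphi_1$, and the final normalization $\alpha=1$ rests on the identity $\psi(1)=\varphi_1(1)=1$, which presupposes that $\mathscr{A}_1$ contains the unit of $\mathscr{A}$ (equivalently, that $\varphi(1_{\mathscr{A}_1})=1$). For $\mathscr{A}_i=\langle\mathscr{E}_i,\mathscr{E}_i\rangle$ this is exactly what Proposition~\ref{exte} is trying to establish (namely $|z|=1$), so invoking it is circular; and Example~\ref{EX10} shows the upgrade genuinely fails without the common unit: there $\varphi_1$ is pure, the trace $\tau$ satisfies \eqref{Hil1} with $m=\tfrac12$, yet $\tau|_{\mathscr{A}_1}=\tfrac12\varphi_1\neq\varphi_1$. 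With only $\varphi|_{\mathscr{A}_i}=\alpha_i\varphi_i$, $\alpha_i\in[m,1]$, your transfer gives $\varphi_1(|z|)=\alpha_2/\alpha_1$, not $1$, and the rest of your argument (every state of $\mathscr{A}_1$ attains $1$ at $|z|$, hence $|z|$ is the unit of $\mathscr{A}_1$) collapses. The surrounding steps are fine --- $|z|\in\mathscr{A}_1\cap\mathscr{A}_2$, the extreme-point selections, and the final Cauchy--Schwarz contradiction with the quasi extension property are all correct --- but they sit on an unsupported foundation.

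The paper's proof is designed precisely to avoid needing exact extensions: it only uses that \emph{vanishing} transfers in both directions through \eqref{Hil1}. It picks $\varphi_1$ norm-attaining at $|z|^2$ and $\widetilde{\varphi}_2$ norm-attaining at $1-|z|^2$, uses definiteness (item (iv) of the Introduction) to compute $\varphi_1\bigl(\bigl|z(1-|z|^2)\bigr|^2\bigr)=0$, pushes this zero through \eqref{Hil1} to get $\varphi_2\bigl(\bigl|z(1-|z|^2)\bigr|\bigr)=0$, factors via the definiteness of $\widetilde{\varphi}_2$ to conclude $\varphi_2(|z|)=0$, and then pushes the zero back through \eqref{Hil1} to force $\varphi_1(|z|)=0$, contradicting $\varphi_1(|z|^2)=1$. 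If you want to rescue your approach, you would need to replace the extension claim by this kind of ``zero propagation'' argument, or else independently prove that $\varphi(1_{\mathscr{A}_i})=1$ --- which you cannot do at this stage.
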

\begin{proof}
Let $\langle \mathscr{E}_2,\mathscr{E}_2\rangle$ have the quasi extension property. Suppose on contrary that $|z|\neq 1$. Note that $1-|z|^2$ is positive, since $\|z\|=1$. There are states $\varphi_1\in S(\langle \mathscr{E}_1,\mathscr{E}_1\rangle)$ and $\widetilde{\varphi}_2\in S(\mathscr{A})$ such that 
\[\varphi_1(|z|^2)=\|\,|z|^2\|=1\qquad \mbox{and}\qquad \widetilde{\varphi}_2(1-|z|^2)=\|1-|z|^2\|\neq 0.\]
Set $\varphi_2:=\widetilde{\varphi}_2|_{\langle \mathscr{E}_2,\mathscr{E}_2\rangle}$. By the module independency, there is a state $\varphi\in S(\mathscr{A})$ such that \eqref{Hil1} is valid. 
Due to that $\varphi_1$ is definite at $|z|^2$, that $\varphi_1\left(|z|^4\right)=\|z^4\|=\|z\|^4=1$ (see (iv) in Introduction), that $\left|z(1-|z|^2)\right|=|z|\left|(1-|z|^2)\right|$, and that $z(1-|z|^2)\in \mathscr{E}_1\cap \mathscr{E}_2$, we have
\begin{align*}
\varphi_1\left(\left|z(1-|z|^2)\right|^2\right) &=\varphi_1\left(|z|^2(1-|z|^2)^2\right)\\
&=\varphi_1\left(|z|^2\right)\varphi_1\left(1-2|z|^2+|z|^4\right)\\
&=\varphi_1\left(|z|^2\right)\left(1-2\varphi_1\left(|z|^2\right)\varphi_1\left(|z|^4\right)\right)=0. 
\end{align*}
Applying \eqref{Hil1}, we get $\varphi\left(\left|z(1-|z|^2)\right|\right)=0$. Again, from \eqref{Hil1}, we infer that
\[\varphi_2\left(\left|z(1-|z|^2)\right|\right)=0.\]
The later inequality and the fact that $\widetilde{\varphi}_2$ is definite at $1-|z|^2$ (see (iv) in Introduction) entail that
\begin{align*}
0=\varphi_2\left(\left|z(1-|z|^2)\right|\right)&= \varphi_2\left(|z|\left|(1-|z|^2)\right|\right)\\
&= \widetilde{\varphi}_2\left(|z|\left|(1-|z|^2)\right|\right)\\
 &=\widetilde{\varphi}_2(|z|) \widetilde{\varphi}_2(1-|z|^2).
\end{align*}
Since $\widetilde{\varphi}_2(1-|z|^2)\neq 0$, we have $\varphi_2 (|z|)=\widetilde{\varphi}_2(|z|)=0$, and so $\varphi_2 \left(|z|^2\right)\leq \|z\|\varphi_2(|z|) =0$. Another application of \eqref{Hil1} yields $\varphi(|z|) =0$ and so $\varphi_1(|z|) =0$. Thus $1=\varphi_1 \left(|z|^2\right)=0$ (see (iii) in Introduction), a contradiction.
\end{proof}

Let $\mathscr{E}_1$ and $\mathscr{E}_2$ be ternary spaces of $\mathscr{E}$. In the following example, we show that if $\mathscr{E}_1$ and $\mathscr{E}_2$ are module independent, then it is not necessary that $\widetilde{\mathscr{A}}_1$ and $\widetilde{\mathscr{A}}_2$ are $C^*$-independent, where $\widetilde{\mathscr{A}}_i$ is the closed linear span of $\{\langle x,y\rangle: x, y\in \mathscr{E}_i\}\cup\{1\}, i=1, 2$.
 The following example elaborates Example~\ref{EX10}.
\begin{example}
Let $(\mathscr{H},\langle\cdot,\cdot\rangle)$ be the complex Hilbert space with $\dim \mathscr{H}=2$. Let $\mathscr{A}$ be the $C^*$-algebra $\mathbb{B}(\mathscr{H})$ of all bounded linear operators on $\mathscr{H}$. Then $\mathscr{E}=\mathscr{H}$ is a Hilbert $C^*$-module under the right module action $x\cdot T=T^*(x)$ and the $\mathscr{A}$-inner product $\langle x,y\rangle'=x\otimes y$ in which $(x\otimes y)(z)=\langle z,y\rangle x$ for $x,y, z\in \mathscr{H}$. Let $\{e_1,e_2\}$ be an orthonormal basis of $\mathscr{H}$ and let $\mathscr{E}_i=\mathscr{H}_i$ be defined as the linear span of $e_i,\,\,i=1, 2$.

It is easy to see that if $\mathscr{A}_i=\langle \mathscr{H}_i,\mathscr{H}_i\rangle$, then $\mathscr{H}_i$ is an $\mathscr{A}_i$-module for each $i=1, 2$. We show that $\mathscr{H}_1$ and $\mathscr{H}_2$ are module independent with $M=\frac{1}{2}$. To achieve this purpose, let $\varphi_i$ be a state on $\mathscr{A}_i=(e_i\otimes e_i)\mathbb{C}$ for $i=1, 2$. Then $\varphi_1=w_{e_1}$ and $ \varphi_2=w_{e_2}$, where the vector state $w_x$ is defined by $w_x(T):=\langle Tx,x\rangle$. We consider the state $\varphi=\frac{1}{2}w_{e_1}+\frac{1}{2}w_{e_2}$. Then for any $x\in \mathscr{H}_i$, we have 
 \begin{align*}
\frac{1}{2}\varphi_i(|x|)\leq \varphi(|x|) \leq \varphi_i(|x|^2)^{\frac{1}{2}},\qquad \mbox{for all~}x\in \mathscr{E}_i, i=1, 2.
\end{align*}
 Note that $\widetilde{\mathscr{A}}_1=\widetilde{\mathscr{A}}_2=\left\{{\rm diag}(\alpha, \beta)\in \mathbb{M}_2(\mathbb{C}): \alpha, \beta\in\mathbb{C}\right\}$ are not $C^*$-independent, since $\widetilde{\mathscr{A}}_1\cap\widetilde{\mathscr{A}}_2\neq \mathbb{C}1$.
\end{example}
In the following example, we show that the quasi extension property is necessary for Proposition \ref{exte}. 
\begin{example}
Let $\mathscr{A}$ contain a nontrivial projection $p$. Consider $\mathscr{E}=\mathscr{A}$ as a Hilbert $\mathscr{A}$-module. Then $\mathscr K=\{p\}$ is a ternary space. In addition, $\langle \mathscr K,\mathscr K\rangle$ does not satisfy the quasi extension property, since if $x\in \ker p$, then the vector state $\varphi=w_{x}$ annihilates $\mathscr K$. Also, it is easy to see that $\mathscr{E}_1=\mathscr{E}_2=\mathscr K$ are module independent, but $|p|\neq 1$.
\end{example}

\begin{theorem}\label{eqa}
Let $\mathscr{E}_1$ and $\mathscr{E}_2$ be ternary spaces of $\mathscr{E}$ and let $z\in \mathscr{E}_1\cap \mathscr{E}_2$ be such that $\langle z,z\rangle=1$.
Then $\mathscr{E}_1$ and $\mathscr{E}_2$ are module independent if and only if $\mathscr{A}_1=\langle \mathscr{E}_1,\mathscr{E}_1\rangle$ and $\mathscr{A}_2=\langle \mathscr{E}_2,\mathscr{E}_2\rangle$ are $C^*$-independent. 
\end{theorem}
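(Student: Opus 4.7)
The plan is to reduce the statement to Corollary~\ref{I2}, which identifies module independence with $C^*$-independence whenever the subalgebras share a common unit. The hypothesis $\langle z,z\rangle = 1$ provides exactly such a unit: one has $|z|^2 = 1 \in \mathscr{A}_1 \cap \mathscr{A}_2$, so both $\mathscr{A}_1$ and $\mathscr{A}_2$ are unital subalgebras of $\mathscr{A}$ sharing the unit $1_{\mathscr{A}}$.

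For the forward implication, I would assume $\mathscr{E}_1$ and $\mathscr{E}_2$ are module independent with constants $m,M$, fix arbitrary states $\varphi_i \in S(\mathscr{A}_i)$, and produce from the module-independence hypothesis the corresponding state $\varphi$ on $\mathscr{A}$. The key move is to transfer the inequality from $\mathscr{E}_i$ to $\mathscr{A}_i$ by multiplying by $z$. Since $\mathscr{E}_i$ is ternary, $\mathscr{E}_i \cdot \mathscr{A}_i \subseteq \mathscr{E}_i$ (a consequence of the identity $\mathscr{F} = \overline{\mathscr{F}\langle\mathscr{F},\mathscr{F}\rangle}$ established earlier, together with closedness of $\mathscr{E}_i$ and approximation of $a \in \mathscr{A}_i$ by sums $\sum \langle x_j,y_j\rangle$). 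Hence, for each $a \in \mathscr{A}_i$, the vector $za$ belongs to $\mathscr{E}_i$ and satisfies
\[
|za|^2 = a^*\langle z, z\rangle a = a^*a = |a|^2,
\]
so $|za| = |a|$. Applying \eqref{Hil1} to $x = za$ yields
\[
m\,\varphi_i(|a|)\;\le\;\varphi(|a|)\;\le\;M\,\varphi_i(|a|^2)^{1/2}, \qquad a\in\mathscr{A}_i.
\]
This is precisely the $C^*$-algebra module-independence condition for $\mathscr{A}_1$ and $\mathscr{A}_2$. Because these algebras share the unit of $\mathscr{A}$, Corollary~\ref{I2} applies and delivers the $C^*$-independence of $\mathscr{A}_1$ and $\mathscr{A}_2$.

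For the converse, assume $\mathscr{A}_1$ and $\mathscr{A}_2$ are $C^*$-independent and fix $\varphi_i \in S(\mathscr{A}_i)$. By $C^*$-independence there exists $\varphi \in S(\mathscr{A})$ simultaneously extending $\varphi_1$ and $\varphi_2$. For any $x\in \mathscr{E}_i$, the element $|x| = \langle x,x\rangle^{1/2}$ lies in $\mathscr{A}_i$, so $\varphi(|x|) = \varphi_i(|x|)$. Combining this equality with the Kadison inequality \eqref{kad}, namely $\varphi_i(|x|)^2 \le \varphi_i(|x|^2)$, shows that \eqref{Hil1} holds with $m = M = 1$, proving module independence.

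I expect no serious obstacle: the whole argument is an application of Corollary~\ref{I2} once we have engineered the correspondence $a \leftrightarrow za$ between $\mathscr{A}_i$ and $\mathscr{E}_i$. The only point that needs care is the inclusion $\mathscr{E}_i \mathscr{A}_i \subseteq \mathscr{E}_i$, which is where the ternary hypothesis is genuinely used; once that is in hand, the passage between the module inequality for $x \in \mathscr{E}_i$ and the algebra inequality for $a \in \mathscr{A}_i$ is immediate.
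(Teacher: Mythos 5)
Your proof is correct and follows essentially the same route as the paper: reduce to Corollary~\ref{I2} after observing that $\langle z,z\rangle=1$ forces $1\in\mathscr{A}_1\cap\mathscr{A}_2$, and use Kadison's inequality for the converse. In fact you supply more detail than the paper does on the forward direction, where the transfer of \eqref{Hil1} from $\mathscr{E}_i$ to $\mathscr{A}_i$ via $a\mapsto za$ and $|za|=|a|$ is exactly the step the paper leaves implicit.
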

\begin{proof}
Let $\mathscr{E}_1$ and $\mathscr{E}_2$ be module independent. Then \eqref{Hil1} shows that $\mathscr{A}_1$ and $\mathscr{A}_2$ are module independent. Corollary \ref{I2} yields that $\mathscr{A}_1$ and $\mathscr{A}_1$ are $C^*$-independent. \\
Now suppose that $\mathscr{A}_1$ and $\mathscr{A}_2$ are $C^*$-independent. Let $\varphi_i\in S(\mathscr{A}_i)$ for $i=1, 2$. Then the common extension $\varphi$ satisfies \eqref{Hil1}. Hence $\mathscr{E}_1$ and $\mathscr{E}_2$ are module independent. 
\end{proof}


\section{More characterizations of module independence and $C^*$-independence}

To prove the main result of this section, we need the result \cite[Theorem 9.1.4]{HAM3} stating that every pure state on a separable $C^*$-algebra $\mathscr{A}$ admits a determining element. A norm-one positive element $a\in\mathscr{A}$ is called \emph{determining element} for a pure state $\varphi$ if $\varphi$ is the only (pure) state on $\mathscr{A}$ such that $\varphi(a)=1$; see \cite[p. 277]{HAM3}.

\begin{theorem}\label{sys}
Let $\mathscr{E}_1$ and $\mathscr{E}_2$ be ternary spaces of $\mathscr{E}$ and let $\mathscr{A}_i=\langle \mathscr{E}_i,\mathscr{E}_i\rangle\,\,(i=1, 2)$.  Let $z_i\in \mathscr{E}_i$ be such that $|z_i|^2a_i=a_i$ for all $a_i\in \mathscr A_i\,\,(i=1,2)$. 
Then $\mathscr{E}_1$ and $\mathscr{E}_2$ are module independent if and only if there are positive constants $m'$ and $M'$ such that for every $x_i\in \mathscr{E}_i\,\,(i=1, 2)$ of norm-one elements and for every $\varphi_i \in S(\mathscr{A}_i)$ with $\varphi_i(|x_i|^2)=1\,\,(i=1, 2)$, there are $\widetilde{\varphi}_i\in S(\mathscr{A})$ for $i=1, 2$ such that
\begin{equation}\label{equi1}
\widetilde{\varphi}_1(|z|)\leq M'\varphi_2\left(|z|^2\right)^{\frac{1}{2}} \,\,(z\in \mathscr{E}_2)
~~\mbox{and}~~
m'\varphi_1(|z|)\leq\widetilde{\varphi}_1(|z|)\leq M'\varphi_1\left(|z|^2\right)^{\frac{1}{2}}\,\, (z\in \mathscr{E}_1)
\end{equation}
and
\begin{equation}\label{equi2}
\widetilde{\varphi}_2(|z|)\leq M'\varphi_1\left(|z|^2\right)^{\frac{1}{2}} \,\,(z\in \mathscr{E}_1)
~~\mbox{and}~~
m'\varphi_2(|z|)\leq\widetilde{\varphi}_2(|z|)\leq M'\varphi_2\left(|z|^2\right)^{\frac{1}{2}}\,\, (z\in \mathscr{E}_2).
\end{equation}
\end{theorem}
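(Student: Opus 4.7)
The plan is to handle the two directions separately. For the forward implication, assume module independence with constants $m, M$ from Definition~\ref{Mi}. Given admissible data $x_i, \varphi_i$ as in the statement, module independence produces a state $\varphi \in S(\mathscr{A})$ satisfying \eqref{Hil1}. I would then simply take $\widetilde{\varphi}_1 = \widetilde{\varphi}_2 = \varphi$: the two-sided bound $m\varphi_1(|z|) \leq \widetilde{\varphi}_1(|z|) \leq M\varphi_1(|z|^2)^{1/2}$ on $\mathscr{E}_1$ is immediate from \eqref{Hil1}, while $\widetilde{\varphi}_1(|z|) \leq M\varphi_2(|z|^2)^{1/2}$ on $\mathscr{E}_2$ is just the upper inequality of \eqref{Hil1} applied with $i=2$. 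The symmetric argument handles $\widetilde{\varphi}_2$, so \eqref{equi1} and \eqref{equi2} hold with $m' = m$ and $M' = M$.

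For the backward implication, the key observation is that $z_i$ provides a distinguished instance of the hypothesis applicable to every state of $\mathscr{A}_i$. Since $|z_i|^2$ serves as the unit of the $C^*$-algebra $\mathscr{A}_i$, every $\varphi_i \in S(\mathscr{A}_i)$ automatically satisfies $\varphi_i(|z_i|^2) = 1$, and $\||z_i|^2\| = 1$ forces $\|z_i\| = 1$. Therefore, given arbitrary states $\varphi_1$ and $\varphi_2$, I can apply the hypothesis with $x_i := z_i$ to obtain states $\widetilde{\varphi}_1, \widetilde{\varphi}_2 \in S(\mathscr{A})$ fulfilling \eqref{equi1} and \eqref{equi2}.

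I then merge these two states into one via convex combination, setting $\varphi := \tfrac{1}{2}(\widetilde{\varphi}_1 + \widetilde{\varphi}_2)$. For $x \in \mathscr{E}_1$, the upper bounds in \eqref{equi1} and \eqref{equi2} jointly control $\widetilde{\varphi}_1(|x|)$ and $\widetilde{\varphi}_2(|x|)$ by $M'\varphi_1(|x|^2)^{1/2}$, yielding $\varphi(|x|) \leq M'\varphi_1(|x|^2)^{1/2}$, while the lower bound from \eqref{equi1} gives $\varphi(|x|) \geq \tfrac{1}{2}\widetilde{\varphi}_1(|x|) \geq \tfrac{m'}{2}\varphi_1(|x|)$. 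The analogous computation on $\mathscr{E}_2$ (using the lower bound from \eqref{equi2}) delivers \eqref{Hil1} with constants $m = m'/2$ and $M = M'$, which is module independence.

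The essential content is thus the recognition that the seemingly weaker ``split'' extension condition \eqref{equi1}--\eqref{equi2}, combined with the unit-like behaviour of $|z_i|^2$, already forces the single-state condition of Definition~\ref{Mi}; the only nontrivial step is the averaging trick. I do not anticipate a substantive technical obstacle beyond this bookkeeping, and the determining-element machinery recalled before the theorem does not appear to enter this particular proof, being presumably reserved for subsequent characterizations in the section.
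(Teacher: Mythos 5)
Your proof is correct, and the forward direction coincides with the paper's (set $\widetilde{\varphi}_1=\widetilde{\varphi}_2=\varphi$ with $m'=m$, $M'=M$). In the backward direction you take a genuinely shorter route. The paper first invokes Theorem~\ref{thpure} to reduce to \emph{pure} states $\varphi_1,\varphi_2$, chooses norm-one positive elements $a_i\in\mathscr{A}_i$ with $\varphi_i(a_i)=1$, and applies the hypothesis to the elements $x_i=z_ia_i^{1/2}$, which satisfy $\varphi_i\bigl(|z_ia_i^{1/2}|^2\bigr)=\varphi_i(a_i)=1$ because $|z_i|^2$ is the unit of $\mathscr{A}_i$; only then does it form the average $\varphi=\frac{1}{2}(\widetilde{\varphi}_1+\widetilde{\varphi}_2)$, exactly as you do and with the same constants $m=m'/2$, $M=M'$. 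Your observation that one may instead take $x_i=z_i$ for \emph{arbitrary} states --- since $|z_i|^2$ is a self-adjoint idempotent acting as the unit of $\mathscr{A}_i$, so $\|z_i\|=1$ and $\varphi_i(|z_i|^2)=1$ automatically for every state $\varphi_i$ --- renders both the pure-state reduction and the auxiliary elements $a_i$ unnecessary, and the bookkeeping for the averaged state is identical in both versions. The paper's detour buys nothing for this particular statement; it is in the subsequent Theorem~\ref{Main} that the pure-state reduction and the determining-element machinery genuinely enter, so your closing remark about where that machinery is actually used is also accurate.
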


\begin{proof}
($\Longrightarrow$) For $i=1, 2$, let $x_i\in \mathscr{E}_i$ be norm-one elements and let $\varphi_i\in S(\mathscr{A}_i)$ be such that $\varphi_i\left(|x_i|^2\right)=1$. By Definition \ref{Mi}, we know that there is $\widetilde{\varphi}\in S(\mathscr A)$ such that \eqref{Hil1} is valid. If we set $\widetilde{\varphi}_1=\widetilde{\varphi}_2=\widetilde{\varphi}$, then \eqref{equi1}  and \eqref{equi2} are valid with $m'=m$ and $M'=M$. 

($\Longleftarrow$) To prove the assertion, it is enough to assume that $\varphi_1\in S(\mathscr{A}_1)$ and $\varphi_2\in S(\mathscr{A}_2)$ are arbitrary pure states; see Theorem \ref{thpure}. We shall show that there is a state $\varphi$ such that \eqref{Hil1} holds. 
As $\varphi_1$ and $\varphi_2$ are pure states, there are norm-one positive elements $a_1\in \mathscr{A}_1$ and $a_2\in  \mathscr{A}_2$ such that $\varphi_i(a_i)=1$, $i=1,2.$ 
This implies 
\[
\varphi_i(|z_ia_i^{\frac{1}{2}}|^2)=\varphi_i(\langle z_ia_i^{\frac{1}{2}},z_ia_i^{\frac{1}{2}}\rangle)=\varphi_i(a_i)=1\qquad (i=1, 2).
\]
Employing the hypothesis to the norm-one elements $z_1a_1^{\frac{1}{2}}\in \mathscr{E}_1$ and $z_2a_2^{\frac{1}{2}}\in \mathscr{E}_2$, there are $\widetilde{\varphi}_1,\widetilde{\varphi}_2\in S(\mathscr{A})$ satisfying \eqref{equi1} and \eqref{equi2}.
Define the state $\varphi\in S(\mathscr{A})$ by
\begin{align*}
\varphi=\frac{1}{2}\widetilde{\varphi}_1+\frac{1}{2}\widetilde{\varphi}_2.
\end{align*}
Then $\varphi$ satisfies condition \eqref{Hil1} with $m=\frac{m'}{2}$ and $M=M'$, since
\[\frac{m'}{2}\varphi_1(|z|)\leq \frac{1}{2}\widetilde{\varphi}_1(|z|)+ \frac{1}{2}\widetilde{\varphi}_2(|z|) \leq \frac{M'}{2}\varphi_1(|z|^2)^{\frac{1}{2}}+\frac{M'}{2}\varphi_1(|z|^2)^{\frac{1}{2}}=M'\varphi_1(|z|^2)^{\frac{1}{2}}\]
for all $z\in \mathscr{E}_1$, and
\[\frac{m'}{2}\varphi_2(|z|)\leq \frac{1}{2}\widetilde{\varphi}_2(|z|)+ \frac{1}{2}\widetilde{\varphi}_1(|z|) \leq \frac{M'}{2}\varphi_2(|z|^2)^{\frac{1}{2}}+\frac{M'}{2}\varphi_2(|z|^2)^{\frac{1}{2}}=M'\varphi_2(|z|^2)^{\frac{1}{2}}\]
for all $z\in \mathscr{E}_2$.

\end{proof}
\begin{corollary}
Let $\mathscr{A}_1$ and $\mathscr{A}_2$ be $C^*$-subalgebras of a $C^*$-algebra $\mathscr{A}$ and let all algebras have the same identity. Then $\mathscr{A}_1$ and $\mathscr{A}_2$ are $C^*$-independent if and only if  for every positive norm-one elements $a_i\in \mathscr{A}_i\,\,(i=1, 2)$ and for every $\varphi_i \in S(\mathscr{A}_i)$ with $\varphi_i(a_i)=1\,\,(i=1, 2)$, there are extensions $\widetilde{\varphi}_i\in S(\mathscr{A})$ for $i=1, 2$ such that
\begin{equation}\label{equi11}
\widetilde{\varphi}_1(z)\leq \varphi_2\left(z^2\right)^{\frac{1}{2}} \,\,(z\in \mathscr{A}^+_2)
\end{equation}
and
\begin{equation}\label{equi22}
\widetilde{\varphi}_2(z)\leq \varphi_1\left(z^2\right)^{\frac{1}{2}} \,\,(z\in \mathscr{A}^+_1).
\end{equation}
\end{corollary}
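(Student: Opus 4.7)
The plan is to derive this corollary from Theorem~\ref{sys} combined with Theorem~\ref{eqa}, viewing each $C^*$-algebra $\mathscr{A}_i$ as a Hilbert module over itself via $\langle a,b\rangle = a^*b$. Since $1 \in \mathscr{A}_1 \cap \mathscr{A}_2$ and $\langle 1,1\rangle = 1$, Theorem~\ref{eqa} identifies $C^*$-independence of $\mathscr{A}_1,\mathscr{A}_2$ with their Hilbert $C^*$-module independence as ternary spaces in $\mathscr{A}$. Moreover, choosing $z_1 = z_2 = 1$ makes the hypothesis $|z_i|^2 a_i = a_i$ of Theorem~\ref{sys} automatic, so the corollary will be a specialization of that theorem to the $C^*$-algebra setting. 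Throughout, I expect the natural constants to be $m' = M' = 1$.

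For the forward direction, I assume $\mathscr{A}_1$ and $\mathscr{A}_2$ are $C^*$-independent and let $\varphi \in S(\mathscr{A})$ be a common state extension of the prescribed $\varphi_1,\varphi_2$; set $\widetilde{\varphi}_1 = \widetilde{\varphi}_2 = \varphi$. For $z \in \mathscr{A}_2^+$ the Kadison inequality \eqref{kad} applied to the self-adjoint $z$ gives
\[
\widetilde{\varphi}_1(z) = \varphi(z) \leq \varphi(z^2)^{1/2} = \varphi_2(z^2)^{1/2},
\]
which is \eqref{equi11}; inequality \eqref{equi22} follows by symmetry. The positive norm-one witnesses $a_i$ play no role in this implication.

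For the reverse direction, the plan is to verify the hypothesis of Theorem~\ref{sys} (with $\mathscr{E}_i = \mathscr{A}_i$ and $z_i = 1$) from the corollary's assumption. Given a norm-one $x_i \in \mathscr{A}_i$ and $\varphi_i \in S(\mathscr{A}_i)$ with $\varphi_i(|x_i|^2) = 1$, I put $a_i := x_i^*x_i$, which is a positive norm-one element of $\mathscr{A}_i$ with $\varphi_i(a_i) = 1$, so the corollary's hypothesis supplies extensions $\widetilde{\varphi}_1,\widetilde{\varphi}_2 \in S(\mathscr{A})$ obeying \eqref{equi11} and \eqref{equi22}. Applying \eqref{equi11} to the positive element $|z|$ for $z \in \mathscr{A}_2$ yields the cross inequality $\widetilde{\varphi}_1(|z|) \leq \varphi_2(|z|^2)^{1/2}$, while the fact that $\widetilde{\varphi}_1$ \emph{extends} $\varphi_1$ together with \eqref{kad} gives the intra-algebra bound $\varphi_1(|z|) = \widetilde{\varphi}_1(|z|) \leq \varphi_1(|z|^2)^{1/2}$ for $z \in \mathscr{A}_1$; the analogous argument for $\widetilde{\varphi}_2$ handles \eqref{equi2}. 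With $m' = M' = 1$, conditions \eqref{equi1} and \eqref{equi2} are verified, so Theorem~\ref{sys} yields module independence, and Theorem~\ref{eqa} upgrades this to $C^*$-independence.

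The only mildly delicate point is that the corollary's hypothesis insists on \emph{extensions} $\widetilde{\varphi}_i$ of $\varphi_i$, not merely arbitrary states satisfying the cross inequalities. This extension property is precisely what is needed to recover the lower bound $m'\varphi_i(|z|) \leq \widetilde{\varphi}_i(|z|)$ and the intra-algebra upper bound in \eqref{equi1} and \eqref{equi2} via Kadison's inequality; without it, a separate argument would be required for $z \in \mathscr{A}_i$. Beyond this bookkeeping, no new ideas are involved.
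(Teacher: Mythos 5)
Your proof is correct and follows essentially the same route as the paper: the forward direction uses a common extension together with Kadison's inequality, and the reverse direction sets $\mathscr{E}_i=\mathscr{A}_i$, verifies the hypothesis of Theorem~\ref{sys} with $m'=M'=1$, and then invokes Theorem~\ref{eqa}. The only difference is that you spell out the verification of \eqref{equi1}--\eqref{equi2} (via $a_i:=x_i^*x_i$ and the extension property), which the paper leaves implicit.
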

\begin{proof}
Let $\mathscr{A}_1$ and $\mathscr{A}_2$ be $C^*$-independent. For $\varphi_i\in S(\mathscr{A}_i)\,\,(i=1, 2)$, let $\varphi\in S(\mathscr{A})$ be their common extension. Employing the Kadison inequality, we observe that $\widetilde{\varphi}_1=\widetilde{\varphi}_2=\varphi$ satisfy \eqref{equi11} and \eqref{equi22}.

For the reverse, put $\mathscr{E}_i=\mathscr{A}_i\,\,(i=1, 2)$. Then \eqref{equi1} and \eqref{equi2} are valid with $m'=1$ and $M'=1$. Thus, $\mathscr{A}_1$ and $\mathscr{A}_2$ are module independent. It follows from Theorem \ref{eqa} that $\mathscr{A}_1$ and $\mathscr{A}_2$ are $C^*$-independent. 
 \end{proof}

In the next result, we provide a characterization of module independence  dealing with one state.
 \begin{theorem}\label{Main}
Let $\mathscr{E}_1$ and $\mathscr{E}_2$ be ternary spaces of $\mathscr{E}$ and let $\mathscr{A}_i=\langle \mathscr{E}_i,\mathscr{E}_i\rangle$ for $i=1, 2$.  Let $z_i\in \mathscr{E}_i$ be such that $|z_i|a_i=a_i$ for all $a_i\in \mathscr A_i\,\,(i=1, 2)$. Then $\mathscr{E}_1$ and $\mathscr{E}_2$ are module independent if and only if there are positive constants $m$ and $M$ such that for every $x_i\in \mathscr{E}_i\,\,(i=1, 2)$ of norm-one elements,  there is $\varphi\in S(\mathscr{A})$  such that 
\begin{equation}\label{Miequ}
m\leq \varphi(|x_i|)=\varphi(|z_i|)\leq M\, \quad  (i=1,2).
\end{equation}
\end{theorem}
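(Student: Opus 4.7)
My plan is to prove both directions by leveraging the hypothesis that $|z_i|$ is the identity of $\mathscr{A}_i = \langle \mathscr{E}_i, \mathscr{E}_i\rangle$; in particular $|z_i|^2 = |z_i|$ is a projection, and every state on $\mathscr{A}_i$ sends $|z_i|$ to $1$. The forward direction will rest on testing \eqref{Hil1} against a cleverly chosen auxiliary element of $\mathscr{E}_i$, while the backward direction will reduce to pure states via Theorem~\ref{thpure} and mimic the determining-element technique used in the proof of Theorem~\ref{sys}.

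For the forward direction, given norm-one $x_i \in \mathscr{E}_i$, I would first use item (iv) of the introduction to pick a state $\varphi_i \in S(\mathscr{A}_i)$ with $\varphi_i(|x_i|) = 1$, which by definiteness also gives $\varphi_i(|x_i|^2) = 1$. Module independence then yields $\varphi \in S(\mathscr{A})$ satisfying \eqref{Hil1}; inserting $x = x_i$ and $x = z_i$ immediately produces $m \leq \varphi(|x_i|), \varphi(|z_i|) \leq M$. The remaining task is to force the equality $\varphi(|x_i|) = \varphi(|z_i|)$, and the key step will be to feed \eqref{Hil1} the element $y_i := z_i(|z_i| - |x_i|) \in \mathscr{E}_i$ (in $\mathscr{E}_i$ by ternariness, since $|z_i| - |x_i| \in \mathscr{A}_i$). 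Using $|z_i|^2 = |z_i|$ one computes $|y_i|^2 = (|z_i| - |x_i|)^2$, so $|y_i| = |z_i| - |x_i| \geq 0$, and hence $\varphi_i(|y_i|^2) = 1 - 2 + 1 = 0$. The upper estimate in \eqref{Hil1} applied to $y_i$ then forces $\varphi(|y_i|) = 0$, i.e., $\varphi(|z_i|) = \varphi(|x_i|)$, as required.

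For the backward direction, by Theorem~\ref{thpure} I would only need to establish \eqref{Hil1} for pure states $\varphi_1, \varphi_2$. The determining-element theorem \cite[Theorem 9.1.4]{HAM3} invoked at the start of Section~3 supplies, for each pure $\varphi_i$, a norm-one positive $a_i \in \mathscr{A}_i$ that determines $\varphi_i$, so $\varphi_i(a_i) = 1$. I would then set $x_i := z_i a_i^{1/2} \in \mathscr{E}_i$; since $|x_i|^2 = a_i^{1/2} |z_i|^2 a_i^{1/2} = a_i$, this element satisfies $|x_i| = a_i^{1/2}$ and $\|x_i\| = 1$. The hypothesis produces $\varphi \in S(\mathscr{A})$ with $m \leq \varphi(a_i^{1/2}) = \varphi(|z_i|) \leq M$. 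Because $|z_i| - a_i^{1/2} \geq 0$ has $\varphi$-value $0$, Cauchy--Schwarz will give $\varphi((|z_i| - a_i^{1/2})b) = \varphi(b(|z_i| - a_i^{1/2})) = 0$ for every $b \in \mathscr{A}$; specialising to $b \in \mathscr{A}_i$ and using that $|z_i|$ is the unit of $\mathscr{A}_i$ yields $\varphi(b) = \varphi(a_i^{1/2} b) = \varphi(b a_i^{1/2})$, and (taking $b = a_i^{1/2}$) in particular $\varphi(a_i) = \varphi(|z_i|)$. The functional $b \mapsto \varphi(b)/\varphi(|z_i|)$ is therefore a state on $\mathscr{A}_i$ mapping the determining element $a_i$ to $1$, so it must equal $\varphi_i$; hence $\varphi|_{\mathscr{A}_i} = \varphi(|z_i|) \varphi_i$. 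Substituting into the Kadison bound gives $m \varphi_i(|x|) \leq \varphi(|x|) \leq \varphi_i(|x|^2)^{1/2}$ for every $x \in \mathscr{E}_i$, establishing \eqref{Hil1} with constants $(m, 1)$.

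The hard part will be spotting the auxiliary element $y_i = z_i(|z_i| - |x_i|)$ in the forward direction; once it is in hand, everything reduces to the Cauchy--Schwarz and Kadison manipulations already codified in the introduction and used throughout the paper. The backward direction will rely on the separability assumption implicit in the application of \cite[Theorem 9.1.4]{HAM3}, which is the standing convention of Section~3.
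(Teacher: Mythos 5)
Your argument is, in substance, the paper's own proof: the forward direction hinges on feeding \eqref{Hil1} an auxiliary element of $\mathscr{E}_i$ whose $\varphi_i$-``norm'' vanishes (you use $y_i=z_i(|z_i|-|x_i|)$, the paper uses $z_i|x_i|^{\frac{1}{2}}-z_i$; both computations are correct and interchangeable), and the backward direction reduces to pure states via Theorem~\ref{thpure}, invokes determining elements, tests the hypothesis on $z_i a_i^{1/2}$ (the paper uses $z_ia_i$, which gives $\varphi(a_i)=\varphi(|z_i|)$ directly without your Cauchy--Schwarz detour), and identifies $\varphi|_{\mathscr{A}_i}$ with $\varphi(|z_i|)\varphi_i$ by determinacy. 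All of these steps check out, including the positivity of $|z_i|-|x_i|$ (from $|z_i|$ being the unit of $\mathscr{A}_i$ and $\||x_i|\|\leq 1$) and the membership $y_i\in\mathscr{E}_i$ by ternariness.

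The one genuine gap is your closing claim that the backward direction may ``rely on the separability assumption \dots which is the standing convention of Section~3.'' There is no such standing convention: Theorem~\ref{Main} carries no separability hypothesis, and the determining-element theorem \cite[Theorem 9.1.4]{HAM3} is only available for separable $C^*$-algebras, so your argument as written proves the converse only when $\mathscr{A}_1$ and $\mathscr{A}_2$ are separable. The paper closes this hole explicitly: for general $\mathscr{A}_i$ it considers, for each finite subset $S$ of $\mathscr{E}_1\cup\mathscr{E}_2$, the weak$^*$-closed set $F_S$ of states on $\mathscr{A}$ satisfying \eqref{Hil1} on $S$; each $F_S$ is nonempty because $\langle S\cap\mathscr{E}_1,S\cap\mathscr{E}_1\rangle$ and $\langle S\cap\mathscr{E}_2,S\cap\mathscr{E}_2\rangle$ are separable, the family $(F_S)_S$ has the finite intersection property, and compactness of $S(\mathscr{A})$ yields a state in $\bigcap_S F_S$. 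You need to add this (or an equivalent) reduction to obtain the theorem as stated.
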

\begin{proof}
($\Longrightarrow$) For $i=1, 2$, let $x_i\in \mathscr{E}_i$ be norm-one and let $\varphi_i\in S(\mathscr{A}_i)$ be such that $\varphi_i\left(|x_i|\right)=1$. By the module independency, there is  $\varphi\in S(\mathscr A)$ such that \eqref{Hil1} is valid. Hence,
\[
m=m\varphi_i(|x_i|)\leq \varphi(|x_i|)\leq M\varphi_i(|x_i|^2)^{\frac{1}{2}}\leq M \varphi_i(|x_i|)^{\frac{1}{2}}\|\,|x_i|\,\|^{\frac{1}{2}}= M.
\]
Since $|z_i|$ acts as the identity of $\mathscr{A}_i$, we have $|z_i|^2=|z_i|$ and $\varphi_i(|z_i|)=1$. In addition, $\varphi_i\left(|x_i|^{\frac{1}{2}}\right)=\varphi_i\left(|x_i|\right)=1$. Therefore, 
\[\varphi_i(\big|z_i|x_i|^{\frac{1}{2}}-z_i\big|^2)=\varphi_i\left(|x_i||z_i|^2-|x_i|^{\frac{1}{2}}|z_i|^2-|z_i|^2|x_i|^{\frac{1}{2}}+|z_i|^2\right)=0.\]
Hence, from \eqref{Hil1} and the fact that $|z_i|x_i|-z_i|=|z_i|(1-|x_i|)$, 
 we have 
\begin{align}\label{eq33}
0=\varphi(\big|z_i|x_i|-z_i\big|)=\varphi(|z_i|(1-|x_i|))=\varphi(|z_i|)-\varphi(|x_i|).
\end{align}
In other words, $\varphi(|x_i|)=\varphi(|z_i|)$.\\

($\Longleftarrow$) To prove the assertion, it is enough to assume that $\varphi_1\in S(\mathscr{A}_1)$ and $\varphi_2\in S(\mathscr{A}_2)$ are arbitrary pure states; see Theorem \ref{thpure}. We   show that there is a state $\varphi$ such that \eqref{Hil1} holds. 

First, we suppose that $\mathscr{A}_1$ and $\mathscr{A}_2$ are separable. According to \cite[Corollary 9.1.4]{HAM3}, there are determining norm-one positive elements $a_i\in \mathscr{A}_i$ for $\varphi_i,\,\,i=1, 2$. The determinacy $a_i$ for $\varphi_i$ implies that 
\[
\varphi_i(|z_ia_i|^2)=\varphi_i(\langle z_ia_i,z_ia_i\rangle)=\varphi_i(a_i^2)=1\qquad (i=1, 2),
\]
since $\varphi_i(a_i)=1$. Employing the hypothesis to the norm-one elements $x_i=z_ia_i\in \mathscr{E}_i\,\,(i=1, 2)$, we infer that there is $\varphi\in S(\mathscr{A})$ satisfying \eqref{Miequ}. Hence,
\begin{equation}\label{eq11}
\varphi(a_i)=\varphi(|x_i|)=\varphi(|z_i|).
\end{equation}
It follows from the inequality at \eqref{Miequ} that $\varphi|_{\mathscr A_i}\neq 0$ for $i=1,2$. Set 
\[
\widetilde{\varphi_i}:=\frac{\varphi|_{\mathscr A_i}}{\|\varphi|_{\mathscr A_i}\|} \quad (i=1,2).
\]
Then $\widetilde{\varphi_i}$ is a state on $\mathscr A_i$. Equality \eqref{eq11} yields that
\[
\widetilde{\varphi_i}(a_i)=\frac{\varphi(a_i)}{\|\varphi|_{\mathscr A_i}\|}=\frac{\varphi(|z_i|)}{\varphi(|z_i|)}=1.
\] 
By the determinacy of $a_i$ for $\varphi_i$, we get $\widetilde{\varphi_i}=\varphi_i$.  For any $x\in \mathscr E_i$, we therefore have 
\begin{align*}
m\varphi_i(|x|)&\leq  \varphi(a_i)  \varphi_i(|x|)\leq \|\varphi|_{\mathscr A_i}\|\,\varphi_i(|x|)\\
&=\varphi(|x|)=\|\varphi|_{\mathscr A_i}\|\,\varphi_i(|x|)= \varphi(|z_i|)\,\varphi_i(|x|)\\
&=\varphi(a_i)\,\varphi_i(|x|)\leq M\varphi_i(|x|^2)^{\frac{1}{2}}.
\end{align*}
Here we use the fact that $\varphi|_{\mathscr A_i}$ is a positive linear functional and so its norm is equal to its value in the identity $|z_i|$ of $\mathscr A_i$. 

 Suppose that $\mathscr{A}_1$ and $\mathscr{A}_2$ are not necessarily separable. Denote by $\mathscr{S}$ the system of all
finite nonempty subsets of $\mathscr{E}_1\cup \mathscr{E}_2$ containing $z$. For any $S\in \mathscr{S}$, we put
\[
F_S=\{\varphi\in \mathcal{S}(\mathscr{A}):\varphi \mbox{~satisfies~}\eqref{Hil1}\}.
\]
Since for any $S\in \mathscr{S}$, the $C^*$-subalgebras $\langle S\cap \mathscr{E}_1,S\cap \mathscr{E}_1\rangle$ and $\langle S\cap \mathscr{E}_2,S\cap \mathscr{E}_2\rangle$ are separable, there is a state $\varphi_S\in \mathcal{S}(A)$ satisfying \eqref{Hil1}. In other words, the sets $F_S$'s are nonempty and closed in the compact space $\mathcal{S}(\mathscr{A})$. Moreover, the inclusion
\[
F_{\cup_{i=1}^nS_i}\subseteq \cap_{i=1}^nF_{S_i}
\] 
shows that the system $(F_S)_{S\in \mathscr{S}}$ satisfies the finite intersection property, and so $F=\cap_{S\in \mathscr{S}}F_S\neq \emptyset$. Thus, any $\varphi\in F$ is the desired state satisfying \eqref{Hil1}.

\end{proof}

\begin{theorem}
Let $\mathscr{E}_1$ and $\mathscr{E}_2$ be ternary spaces of $\mathscr{E}$ and let $\mathscr{A}_i=\langle \mathscr{E}_i,\mathscr{E}_i\rangle$ for $i=1, 2$.  Let $z_i\in \mathscr{E}_i$ be such that $|z_i|a_i=a_i$ for all $a_i\in \mathscr A_i\,\,(i=1)$. 
Then $\mathscr{E}_1$ and $\mathscr{E}_2$ are module independent if and only if there are positive constants $m$ and $M$ such that for every $x_i\in \mathscr{E}_i\,\,(i=1, 2)$ of norm-one elements, there is $\varphi\in S(\mathscr{A})$  such that $\frac{\varphi}{\varphi(|z_i|)}$ is definite at $|x_i|$ and 
\begin{equation}\label{Miequ2}
 m\leq \varphi(|x_1|)=\varphi(|x_1||x_2|)=\varphi(|x_2|)\leq M.
\end{equation}
\end{theorem}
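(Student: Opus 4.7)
My plan is to mediate both directions through Theorem~\ref{Main}, systematically exploiting the auxiliary vector $y_i := z_i - z_i|x_i| \in \mathscr{E}_i$ together with Cauchy--Schwarz. For the chain $\varphi(|x_1|) = \varphi(|x_1||x_2|) = \varphi(|x_2|)$ to be realizable the two distinguished vectors need to have matching supports, so I would work under the natural reading $z := z_1 = z_2 \in \mathscr{E}_1 \cap \mathscr{E}_2$, making $|z| = |z_1| = |z_2|$ a common projection that serves as the unit of both $\mathscr{A}_1$ and $\mathscr{A}_2$.

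For the forward direction, fix norm-one $x_i \in \mathscr{E}_i$ and use Hahn--Banach to pick pure states $\varphi_i$ on $\mathscr{A}_i$ attaining norm one at $|x_i|$; item~(iv) then gives $\varphi_i(|x_i|^n) = 1$ for every $n$. Module independence furnishes $\varphi \in S(\mathscr{A})$ obeying \eqref{Hil1}. The pivotal computation is that $|y_i|^2 = |z| - 2|x_i| + |x_i|^2$ satisfies $\varphi_i(|y_i|^2) = 0$, so \eqref{Hil1} combined with item~(iii) forces $\varphi(|y_i|) = 0$ and then $\varphi(|y_i|^2) = 0$. A first Cauchy--Schwarz on the pair $(|x_i|,\, |z|-|x_i|)$ collapses the resulting data into
\[
\varphi(|x_i|) = \varphi(|z|) = \varphi(|x_i|^2),
\]
from which the definiteness of $\varphi/\varphi(|z|)$ at $|x_i|$ follows, as both sides of $\varphi(|x_i|)^2 = \varphi(|z|)\varphi(|x_i|^2)$ equal $\varphi(|z|)^2$. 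A second Cauchy--Schwarz applied to $(|z|-|x_i|,\, |x_j|)$ yields $\varphi(|x_i||x_j|) = \varphi(|z||x_j|) = \varphi(|x_j|)$, closing the equality chain; the bounds $\varphi(|x_i|) \in [m,M]$ are read off \eqref{Hil1}.

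For the reverse direction, let $\varphi$ be the state supplied by \eqref{Miequ2} for given norm-one $x_1, x_2$, and set $a := \varphi(|x_1|) = \varphi(|x_2|) = \varphi(|x_1||x_2|)$ and $b := \varphi(|z|)$. The definiteness gives $a^2 = b\,\varphi(|x_i|^2)$, which combined with $\varphi(|x_i|^2) \leq a$ from item~(iii) yields $a \leq b$. Conversely, Cauchy--Schwarz on the product $|x_1||x_2|$ produces
\[
a^2 = |\varphi(|x_1||x_2|)|^2 \leq \varphi(|x_1|^2)\,\varphi(|x_2|^2) = \frac{a^4}{b^2},
\]
whence $b \leq a$. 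Therefore $\varphi(|x_i|) = \varphi(|z|) = \varphi(|z_i|)$ with $m \leq \varphi(|x_i|) \leq M$, so condition \eqref{Miequ} of Theorem~\ref{Main} is verified and module independence follows.

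The main obstacle is the cross-algebra identification $\varphi(|x_i||x_j|) = \varphi(|x_j|)$ in the forward direction, which reduces to $|z||x_j| = |x_j|$ and therefore relies essentially on the common-unit compatibility of $z_1$ and $z_2$. Once this is in place the remainder of both directions boils down to two clean applications of Cauchy--Schwarz against $y_i$.
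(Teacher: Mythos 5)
Your proof is correct and, at the decisive step, more careful than the paper's own argument. Both proofs share the same skeleton: everything is funnelled through Theorem~\ref{Main}, so the real content is producing the equality $\varphi(|x_i|)=\varphi(|z_i|)$ together with the cross term $\varphi(|x_1||x_2|)$. The paper obtains the chain in \eqref{Miequ2} by asserting that definiteness of the rescaled functional $\varphi/\varphi(|z_i|)$ at $|x_i|$ yields the product formula $\varphi(|x_1||x_2|)/\varphi(|z_i|)=\bigl(\varphi(|x_1|)/\varphi(|z_i|)\bigr)\bigl(\varphi(|x_2|)/\varphi(|z_i|)\bigr)$; but $\varphi/\varphi(|z_i|)$ is not a state on $\mathscr{A}$ (its value at $1$ is $1/\varphi(|z_i|)\ge 1$), so the multiplicative property of item (ii) does not apply to it, and the formula genuinely fails without an extra hypothesis: for $\mathscr{E}_1=\mathbb{C}p_1$, $\mathscr{E}_2=\mathbb{C}p_2$ in $M_2$ with orthogonal rank-one projections (the paper's own Example~\ref{EX10}) one has module independence yet $|x_1||x_2|=p_1p_2=0$, so \eqref{Miequ2} cannot hold with $m>0$. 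You correctly diagnosed that the statement is only viable when $|z_1|=|z_2|$ (e.g.\ $z_1=z_2=z$), which is precisely where $\varphi(|x_1||x_2|)=\varphi(|z||x_2|)=\varphi(|x_2|)$ becomes available; your substitute for the multiplicativity --- showing $|z|-|x_i|=\bigl|z-z|x_i|\bigr|$ lies in the left kernel of $\varphi$ via \eqref{Hil1} and then applying Cauchy--Schwarz --- is the sound way to carry out the forward direction. Your converse is likewise cleaner than the paper's: squeezing $\varphi(|z|)$ between $\varphi(|x_i|)$ using item (iii) on one side and Cauchy--Schwarz on $\varphi(|x_1||x_2|)$ on the other reaches \eqref{Miequ} without ever invoking the dubious product formula (and, as written, does not even require $z_1=z_2$). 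So: same route as the paper in outline, but your handling of the cross term is the correct one, and the common-unit reading you adopt is not cosmetic --- it is needed for the theorem to be true as stated.
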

 \begin{proof}
 Let  $\mathscr{E}_1$ and $\mathscr{E}_2$ be module independent. It follows from Theorem \ref{Main} that there exists $\varphi\in S(\mathscr A)$ such that \eqref{Miequ} is valid. At first, we show that $\psi=\frac{\varphi}{\varphi(|z_i|)}$ is definite at $|x_i|$, 
 but it follows immediately from  the fact that 
 \[ \psi(|x_i|)= \frac{\varphi(|x_i|)}{\varphi(|z_i|)} =1.    \]
Thus,
 \begin{align}\label{Mieq3}
 \frac{\varphi(|x_1||x_2|)}{\varphi(|z_i|)}=\frac{\varphi(|x_1|)}{\varphi(|z_i|)}\frac{\varphi(|x_2|)}{\varphi(|z_i|)} \quad(\mbox{for\quad}i=1,2).
 \end{align}
Applying \eqref{Mieq3} for $i=1$ and again for $i=2$ and employing \eqref{Miequ}, we get 
 \begin{equation*} 
 m\leq \varphi(|x_1|)=\varphi(|x_1||x_2|)=\varphi(|x_2|)\leq M.
 \end{equation*}
 
 For the reverse statement, let $x_i\in \mathscr E_i$ be norm-one. Let $\varphi\in S(\mathscr A)$ be definite at $x_i$ and let \eqref{Miequ2} be valid. We show that \eqref{Miequ} is valid. In fact,  
 \begin{align*}\label{eq11}
\frac{\varphi(|x_i|)}{\varphi(|z_i|)}&=\frac{\varphi(|x_1||x_2|)}{\varphi(|z_i|)}\qquad\qquad(\mbox{by \eqref{Miequ2}})\\
&=\frac{\varphi(|x_1|)}{\varphi(|z_1|)}\frac{\varphi(|x_2|)}{\varphi(|z_1|)}\qquad(\frac{\varphi}{\varphi(|z_i|)} \mbox{~is~ definite~at~} x_i)\\
&=\left(\frac{\varphi(|x_i|)}{\varphi(|z_i|)}\right)^2\qquad\qquad(\mbox{by \eqref{Miequ2}}).
\end{align*}
Hence $\varphi(|x_i|)=\varphi(|z_i|)$. Now, Theorem~\ref {Main} ensures that $\mathscr{E}_1$ and $\mathscr{E}_2$ are module independent.
\end{proof}


\section{Deforming a Hilbert $C^*$-module to a $C^*$-algebra}

Deforming a space is a useful technique to investigate some properties of the space; for example, see \cite{NAJ}.

Let $\mathscr{E}$ be a Hilbert $\mathscr{A}$-module. Let $z_0\in \mathscr{E}$ be such that $\langle z_0,z_0\rangle=1$. Then $\mathscr{E}$ is orthogonally decomposed into two summands, the one generated by $z_0$ and its orthogonal complement. Moreover, the $C^*$-algebra $\mathscr{A}$ is canonically isomorphic to the module generated by $z_0$ via the isomorphism $a\mapsto z_0a,\,\,a\in A$, or equivalently, to the quotient module $[\mathscr{E}]:=\frac{\mathscr{E}}{\{z_0\}^\perp}$. For the convenience, we denote $x+\{z_0\}^\perp$ by $[x]$ for $x\in \mathscr{E}$. Since $\langle x-z_0\langle z_0,x\rangle,z_0\rangle=0$, we have 
\begin{equation}\label{equot}
[x]=[z_0\langle z_0,x\rangle] \qquad (x\in \mathscr{E}).
\end{equation}

 Consider the natural operations of addition and scalar multiplication on $[\mathscr{E}]$, and define the well-defined multiplication $\circ$ as follows:
\[
\circ:[\mathscr{E}]\times [\mathscr{E}]\to [\mathscr{E}]\,,\quad [x]\circ[y]=[z_0\langle z_0,x\rangle\langle z_0,y\rangle].
\]
Also the involution on $[\mathscr{E}]$ is defined by 
\[
[x]^\sharp=[z_0\langle x, z_0\rangle].
\]
In virtue of \eqref{equot}, we observe that $[z_0]$ is the identity of $[\mathscr{E}]$. 
Put the following norm on $[\mathscr{E}]$:
\[
\|\,[x]\,\|=\|\langle z_0,x\rangle\|.
\]
With this notation, one can easily verify that $([\mathscr{E}],\circ,\sharp,\|\cdot\|)$ is a $C^*$-algebra.

Let $\mathscr F$ be a ternary space. Then $[\mathscr F]=\{[x]:x\in \mathscr F\}$ is a $C^*$-subalgebra of $[\mathscr{E}]$. Let $\varphi\in S([\mathscr F])$. We define the functional $\widetilde{\varphi}$ by 
\[
\widetilde{\varphi}(a)=\varphi([z_0a]).
\]
Due to $\widetilde{\varphi}(b^*b)=\varphi([z_0b^*b]=\varphi([z_0b]^\#\circ[z_0b])\geq 0$ for all $b\in \langle \mathscr F,\mathscr F\rangle$ and $\widetilde{\varphi}(1)=\varphi([z_0])=1$, we arrive at $\widetilde{\varphi}\in S(\langle \mathscr F,\mathscr F\rangle)$.

Also it is seen that if $\psi\in S(\langle \mathscr F,\mathscr F\rangle) $, then by defining 
$\hat{\psi}([x])=\psi(\langle z_0, x\rangle)$, we have $\hat{\psi}\in S([\mathscr F])$. Indeed, if $[x]\geq 0$, then there is $y\in \mathscr F$ such that $[x]=[y]^\sharp\circ [y]=[z_0|\langle z_0,y\rangle|^2]$. So, we have 
\[
\hat{\psi}([x])=\psi(\langle z_0,z_0|\langle z_0,y\rangle|^2\rangle)=\psi(|\langle z_0,y\rangle|^2)\geq 0.
\]
In addition, $\hat{\psi}([z_0])=\psi(1)=1$. 

Furthermore,  
\[
\widetilde{\hat{\psi}}(a)=\hat{\psi}([z_0a])=\psi(\langle z_0,z_0a\rangle)=\psi(a)
\]
for all $a\in \langle \mathscr F,\mathscr F\rangle$. Similarly, $\hat{\widetilde{\varphi}}=\varphi$.
 Hence 
\begin{equation}
S([\mathscr{F}])\to S(\langle \mathscr{F},\mathscr{F}\rangle)\,,\quad
\varphi\mapsto\widetilde{\varphi}
\end{equation}
is an algebraic isomorphism. 
\begin{theorem}\label{orth}
Let $\mathscr{E}_1$ and $\mathscr{E}_2$ be ternary spaces of $\mathscr{E}$ and let $z_0\in \mathscr{E}_1\cap \mathscr{E}_2$ be such that $\langle z_0,z_0\rangle=1$.
Then $\mathscr{E}_1$ and $\mathscr{E}_2$ are module independent if and only if $C^*$-subalgebras $[\mathscr{E}_1]$ and $[\mathscr{E}_2]$ of $[\mathscr{E}]$ are $C^*$-independent.
\end{theorem}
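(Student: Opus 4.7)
The strategy is to observe that the $C^*$-algebra $[\mathscr{E}]$ is essentially just $\mathscr{A}$ in disguise, with $[\mathscr{E}_i]$ corresponding to $\mathscr{A}_i := \langle \mathscr{E}_i, \mathscr{E}_i\rangle$, and then invoke Theorem \ref{eqa}, which already settles module independence in terms of $C^*$-independence of the coefficient algebras whenever there is a vector $z \in \mathscr{E}_1 \cap \mathscr{E}_2$ with $\langle z, z\rangle = 1$.

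Concretely, I would first verify that the map $\Phi \colon [\mathscr{E}] \to \mathscr{A}$ defined by $\Phi([x]) = \langle z_0, x\rangle$ is a $*$-isomorphism of $C^*$-algebras. Well-definedness and isometry are immediate from the definition $\|[x]\| = \|\langle z_0, x\rangle\|$ and the relation $[x] = [y] \iff \langle z_0, x-y\rangle = 0$. Multiplicativity comes from
\[
\Phi([x] \circ [y]) = \langle z_0, z_0 \langle z_0, x\rangle \langle z_0, y\rangle\rangle = \langle z_0, x\rangle \langle z_0, y\rangle = \Phi([x])\Phi([y]),
\]
the $*$-property from $\Phi([x]^\sharp) = \langle z_0, z_0 \langle x, z_0\rangle\rangle = \langle x, z_0\rangle = \Phi([x])^*$, and surjectivity from the identity $\Phi([z_0 a]) = a$ for every $a \in \mathscr{A}$. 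Next, since $z_0 \in \mathscr{E}_i$ and $\mathscr{E}_i$ is ternary, $z_0 \mathscr{A}_i \subseteq \mathscr{E}_i$, so
\[
\mathscr{A}_i \;=\; \langle z_0, z_0 \mathscr{A}_i\rangle \;\subseteq\; \langle z_0, \mathscr{E}_i\rangle \;\subseteq\; \langle \mathscr{E}_i, \mathscr{E}_i\rangle \;=\; \mathscr{A}_i,
\]
which forces $\Phi([\mathscr{E}_i]) = \mathscr{A}_i$. Therefore $[\mathscr{E}_1]$ and $[\mathscr{E}_2]$ are $C^*$-independent in $[\mathscr{E}]$ if and only if $\mathscr{A}_1$ and $\mathscr{A}_2$ are $C^*$-independent in $\mathscr{A}$; note that all three contain the common unit $[z_0] \leftrightarrow 1$.

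The element $z_0 \in \mathscr{E}_1 \cap \mathscr{E}_2$ with $\langle z_0, z_0\rangle = 1$ is exactly the hypothesis of Theorem \ref{eqa}, which tells us that $\mathscr{E}_1, \mathscr{E}_2$ are module independent if and only if $\mathscr{A}_1, \mathscr{A}_2$ are $C^*$-independent. Chaining the two equivalences yields the claim. I do not anticipate a real obstacle here, since everything reduces to bookkeeping verifications of the $C^*$-structure on $[\mathscr{E}]$; the only thing to be careful about is ensuring that the $*$-isomorphism $\Phi$ sends the subalgebra $[\mathscr{E}_i]$ precisely onto $\mathscr{A}_i$ (not merely into it), which is where the ternary property of $\mathscr{E}_i$ combined with $z_0 \in \mathscr{E}_i$ is used. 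One could equivalently avoid introducing $\Phi$ by running the argument entirely through the state correspondence $S([\mathscr{F}]) \leftrightarrow S(\langle \mathscr{F}, \mathscr{F}\rangle)$ displayed before the statement, checking that it commutes with restriction to subalgebras and again appealing to Theorem \ref{eqa}.
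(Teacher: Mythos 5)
Your proof is correct, but it follows a genuinely different route from the paper's. You package the quotient construction as an explicit $*$-isomorphism $\Phi\colon [\mathscr{E}]\to\mathscr{A}$, $\Phi([x])=\langle z_0,x\rangle$, check that it carries $[\mathscr{E}_i]$ onto $\mathscr{A}_i=\langle\mathscr{E}_i,\mathscr{E}_i\rangle$ (the one genuinely nontrivial point, where you correctly use the ternary property to get $z_0\mathscr{A}_i\subseteq\mathscr{E}_i$ and hence surjectivity of $\Phi|_{[\mathscr{E}_i]}$ onto $\mathscr{A}_i$), and then reduce the whole statement to Theorem~\ref{eqa}; in effect you show that Theorem~\ref{orth} is Theorem~\ref{eqa} transported along $\Phi$. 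The paper does not invoke Theorem~\ref{eqa} at all: it works directly with the state correspondences $\varphi\mapsto\widetilde{\varphi}$ and $\psi\mapsto\hat{\psi}$ (which are exactly pullback along $\Phi$ and $\Phi^{-1}$, though the paper never says so), re-derives the chain of inequalities~\eqref{Hil1} inside $[\mathscr{E}]$ to conclude that $[\mathscr{E}_1]$ and $[\mathscr{E}_2]$ are module independent, and then applies Corollary~\ref{I2}; the converse direction is handled by pulling back a common extension. Your argument buys brevity and makes the conceptual content transparent ($[\mathscr{E}]$ is just $\mathscr{A}$ in disguise, with $[\mathscr{E}_i]$ identified with $\mathscr{A}_i$), at the cost of leaning on Theorem~\ref{eqa}, whose forward direction itself silently relies on the passage from states on $\langle\mathscr{E}_i,\mathscr{E}_i\rangle$ restricted to module elements; the paper's version is self-contained within Section~4 and spells out the state bijection that your $\Phi$ induces. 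No circularity arises, since Theorem~\ref{eqa} is proved earlier and independently of Theorem~\ref{orth}.
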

\begin{proof}
Suppose that $\mathscr{E}_1$ and $\mathscr{E}_2$ are module independent with $m\leq 1\leq M$. We shall show that $[\mathscr{E}_i]$ are $C^*$-independent. Let $\varphi_i\in S([\mathscr{E}_i])$ and let $\widetilde{\varphi}_i\in S(\mathscr{E}_i,\mathscr{E}_i\rangle)$, for $i=1,2$, be the corresponding states constructed in the preceding of this theorem. Assume that $\widetilde{\varphi}$ is the state in $S(\mathscr{A})$ satisfying \eqref{Hil1}. Set $\varphi:=\hat{\widetilde{\varphi}}$. Since $[x]=[z_0\langle z_0,x\rangle]$ for all $x\in \mathscr{E}_i$, we have 
\begin{align*}
m\varphi_i(|[x]|)&=m\varphi_i(([x]^\sharp \circ [x])^{\frac{1}{2}})\\
&=m\varphi_i([z_0|\langle z_0,x\rangle|])\\
&=m\widetilde{\varphi}_i(|\langle z_0,x\rangle|)\\
&\leq \widetilde{\varphi}(|\langle z_0,x\rangle|)\\
&= \varphi([z_0|\langle z_0,x\rangle|])\\
&=\varphi(|[x]|)\\
&=\widetilde{\varphi}(|\langle z_0,x\rangle |)\\
&\leq M\widetilde{\varphi}_i(|\langle z_0,x\rangle|^2)^{\frac{1}{2}}\\
&=M\varphi_i([z_0|\langle z_0,x\rangle|^2])^{\frac{1}{2}}\\
&=M\varphi_i(|[x]|^2)^{\frac{1}{2}}\qquad (i=1, 2).
\end{align*}
This shows that $[\mathscr{E}_i]$ are module independent. It follows from Corollary \ref{I2} that $[\mathscr{E}_i]$ are $C^*$-independent. 

For the reverse, let $[\mathscr{E}_i]$ be $C^*$-independent and let $\mathscr \varphi_i\in S(\langle \mathscr{E}_i,\mathscr{E}_i\rangle)$ for $i=1,2$. Since $\hat{\varphi}_i\in S([\mathscr{E}_i])$, by $C^*$-independency, there is a common extension $\varphi\in S([\mathscr{E}])$. Then it is easy to see that $\widetilde{\varphi}\in S(\mathscr{A})$ satisfies \eqref{Hil1}. 
\end{proof}

\begin{corollary}\label{FFSS}
Let $\mathscr{E}_1$ and $\mathscr{E}_2$ be ternary spaces of $\mathscr{E}$ and let $z_0\in \mathscr{E}_1\cap \mathscr{E}_2$ be such that $\langle z_0,z_0\rangle=1$. Then $\mathscr{E}_1$ and $\mathscr{E}_2$ are module independent if and only if 
\begin{equation}\label{inde}
\|\langle x,z_0\rangle\langle y,z_0\rangle\|=\|\langle x,z_0\rangle\|\,\|\langle y,z_0\rangle\|\quad (x\in \mathscr{E}_1, y\in \mathscr{E}_2).
\end{equation}
\end{corollary}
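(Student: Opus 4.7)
The plan is to derive this corollary by chaining two results: first, Theorem \ref{orth} which transfers the module-independence question to the $C^*$-independence of $[\mathscr{E}_1]$ and $[\mathscr{E}_2]$ in the deformed $C^*$-algebra $[\mathscr{E}]$, and second, the Florig--Summers characterization (recalled in the Introduction) which says that two unital $C^*$-subalgebras sharing a unit are $C^*$-independent if and only if the norm of a product of elements factorizes. So the whole statement becomes a translation between the norm identity for $\circ$ in $[\mathscr{E}]$ and the desired identity \eqref{inde}.

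Carrying this out, I would first apply Theorem \ref{orth} to reduce the module independence of $\mathscr{E}_1$ and $\mathscr{E}_2$ to the $C^*$-independence of $[\mathscr{E}_1]$ and $[\mathscr{E}_2]$, which share the common unit $[z_0]$. By Florig--Summers, the latter is equivalent to
\[
\|[x]\circ[y]\|=\|[x]\|\,\|[y]\|\qquad (x\in\mathscr{E}_1,\ y\in\mathscr{E}_2).
\]
Next I would compute both sides using the definitions of the norm and product on $[\mathscr{E}]$. Since $\|[w]\|=\|\langle z_0,w\rangle\|$ and $[x]\circ[y]=[z_0\langle z_0,x\rangle\langle z_0,y\rangle]$, we get
\[
\|[x]\circ[y]\|=\|\langle z_0,z_0\langle z_0,x\rangle\langle z_0,y\rangle\rangle\|=\|\langle z_0,x\rangle\langle z_0,y\rangle\|,
\]
using $\langle z_0,z_0\rangle=1$. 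Hence the Florig--Summers condition on $[\mathscr{E}_1],[\mathscr{E}_2]$ is exactly
\[
\|\langle z_0,x\rangle\langle z_0,y\rangle\|=\|\langle z_0,x\rangle\|\,\|\langle z_0,y\rangle\| \qquad (x\in\mathscr{E}_1,\ y\in\mathscr{E}_2).
\]

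The final step is to convert this between the two orderings of the inner product. Taking adjoints in the $C^*$-algebra $\mathscr{A}$, we have $\langle x,z_0\rangle\langle y,z_0\rangle=\bigl(\langle z_0,y\rangle\langle z_0,x\rangle\bigr)^*$, so $\|\langle x,z_0\rangle\langle y,z_0\rangle\|=\|\langle z_0,y\rangle\langle z_0,x\rangle\|$, together with $\|\langle z_0,x\rangle\|=\|\langle x,z_0\rangle\|$ and the analogous identity in $y$. Applying Florig--Summers symmetrically (once with roles of $[\mathscr{E}_1]$ and $[\mathscr{E}_2]$ swapped), the factorization identity holds for $\|\langle z_0,y\rangle\langle z_0,x\rangle\|$ as well, and this rearranges precisely into \eqref{inde}. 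Conversely, \eqref{inde} translates back through the same chain of equalities to the Florig--Summers condition in $[\mathscr{E}]$, which by Theorem \ref{orth} gives module independence of $\mathscr{E}_1,\mathscr{E}_2$.

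There is no serious obstacle here; the proof is essentially bookkeeping. The only point that requires attention is the order of the factors: the $C^*$-product $[x]\circ[y]$ naturally produces $\langle z_0,x\rangle\langle z_0,y\rangle$, whereas the statement of the corollary is formulated with $\langle x,z_0\rangle\langle y,z_0\rangle$. One must therefore note that $C^*$-independence is symmetric in the two subalgebras (so both orderings of the product are controlled) and use the isometry of the involution to identify the resulting norm identities.
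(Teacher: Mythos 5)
Your proposal is correct and follows exactly the paper's route: reduce via Theorem \ref{orth} to $C^*$-independence of $[\mathscr{E}_1]$ and $[\mathscr{E}_2]$, apply the Florig--Summers norm characterization there, and unwind the definitions of $\circ$ and $\|\cdot\|$ on $[\mathscr{E}]$. Your extra care about the factor ordering (passing from $\langle z_0,x\rangle\langle z_0,y\rangle$ to $\langle x,z_0\rangle\langle y,z_0\rangle$ via the involution and the symmetry of $C^*$-independence) is a detail the paper glosses over, and it is handled correctly.
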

\begin{proof}
Note that \eqref{inde} is equivalent to
\[
\|\,[x]\circ [y]\,\|=\|\,[x]\,\|\,\|\,[y]\,\|.
\]
Utilizing \cite[Theorem 11.2.5]{HAM3}, we deduce that $[\mathscr{E}_1]$ and $[\mathscr{E}_2]$ are $C^*$-independence. In virtue of Theorem \ref{orth}, this is equivalent to the module independency of $\mathscr{E}_1$ and $\mathscr{E}_2$.
\end{proof}

\begin{corollary} \cite{FLS} Let $\mathscr{A}_1$ and $\mathscr{A}_2$ be $C^*$-subalgebras of a $C^*$-algebra $\mathscr{A}$ and let all algebras have the same identity $1$.  Then $\mathscr{A}_1$ and $\mathscr{A}_2$ are $C^*$-independent if and only if $\|ab\|=\|a\|\,\|b\|$ for all $a\in\mathscr{A}_1$ and $b\in \mathscr{A}_2$.
\end{corollary}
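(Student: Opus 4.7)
The plan is to reduce this corollary to the combination of Theorem~\ref{eqa} and Corollary~\ref{FFSS} by viewing the ambient $C^*$-algebra as a Hilbert module over itself.

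First I would regard $\mathscr{A}$ as a Hilbert $\mathscr{A}$-module via the inner product $\langle a,b\rangle=a^*b$, and choose $z_0=1$, which is the common unit of $\mathscr{A}_1$, $\mathscr{A}_2$ and $\mathscr{A}$. Then $\langle z_0,z_0\rangle=1$, and each $\mathscr{A}_i$ is a ternary subspace containing $z_0$ (since a unital $C^*$-subalgebra is closed under $a,b,c\mapsto ab^*c$). Because $1\in \mathscr{A}_i$, the auxiliary algebra $\langle\mathscr{A}_i,\mathscr{A}_i\rangle$ (the closed linear span of products $a^*b$) is exactly $\mathscr{A}_i$ itself.

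Next, by Theorem~\ref{eqa} applied to $\mathscr{E}_i=\mathscr{A}_i$ with $z=z_0=1$, the ternary subspaces $\mathscr{A}_1$ and $\mathscr{A}_2$ are Hilbert $C^*$-module independent if and only if $\langle\mathscr{A}_1,\mathscr{A}_1\rangle=\mathscr{A}_1$ and $\langle\mathscr{A}_2,\mathscr{A}_2\rangle=\mathscr{A}_2$ are $C^*$-independent in the usual sense. On the other hand, Corollary~\ref{FFSS} translates module independence into the norm condition
\[
\|\langle x,z_0\rangle\langle y,z_0\rangle\|=\|\langle x,z_0\rangle\|\,\|\langle y,z_0\rangle\|\qquad (x\in\mathscr{A}_1,\ y\in\mathscr{A}_2).
\]
With $z_0=1$ this reads $\|x^*y^*\|=\|x^*\|\,\|y^*\|$, which, applying the isometric involution of $\mathscr{A}$, is equivalent to $\|yx\|=\|y\|\,\|x\|$ for all $x\in\mathscr{A}_1$, $y\in\mathscr{A}_2$; renaming the variables yields the stated multiplicativity $\|ab\|=\|a\|\,\|b\|$ for all $a\in\mathscr{A}_1$, $b\in\mathscr{A}_2$.

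Chaining the two equivalences, $C^*$-independence of $\mathscr{A}_1$ and $\mathscr{A}_2$ is equivalent to their module independence, which in turn is equivalent to the norm-multiplicativity condition, completing the proof. The only delicate points are the verifications that $\mathscr{A}_i$ is ternary inside $\mathscr{A}$ viewed as a module and that $\langle\mathscr{A}_i,\mathscr{A}_i\rangle=\mathscr{A}_i$; both are immediate from $1\in\mathscr{A}_i$, so no substantial obstacle is expected beyond this routine identification.
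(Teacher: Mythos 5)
Your proposal is correct and follows essentially the same route as the paper, which proves the corollary by combining Corollary~\ref{I2} and Corollary~\ref{FFSS} with $z_0=1$; your substitution of Theorem~\ref{eqa} for Corollary~\ref{I2} is immaterial here, since with $\mathscr{E}_i=\mathscr{A}_i=\langle\mathscr{A}_i,\mathscr{A}_i\rangle$ Theorem~\ref{eqa} reduces to exactly that corollary (and is proved from it). The extra details you supply --- that $\mathscr{A}_i$ is ternary, that $\langle\mathscr{A}_i,\mathscr{A}_i\rangle=\mathscr{A}_i$, and the passage from $\|x^*y^*\|=\|x^*\|\,\|y^*\|$ to $\|ab\|=\|a\|\,\|b\|$ via the isometric involution --- are all correct and merely make explicit what the paper leaves implicit.
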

\begin{proof}
It follows from Corollaries \ref{I2} and \ref{FFSS} with $z_0=1$.
\end{proof}

\section{Further examples}

In this section, we go further and show that how module independence differs from  $C^*$-independence in some directions. We start this section with following proposition.  

\begin{proposition}
Let $\mathscr E_i\subseteq\mathscr A$, $i=1,2$, be self-modules such that $\dim\mathscr E_i=1$ and $\dim(\langle\mathscr E_i,\mathscr E_i\rangle)=1$, $i=1,2$. Then they are module independent.
\end{proposition}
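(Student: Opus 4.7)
The plan is to reduce the module independence inequalities to a uniform lower bound on the values of a single state at two projections, and then to construct that state as an averaged convex combination. First I would identify the ternary subspaces concretely: because $\langle\mathscr{E}_i,\mathscr{E}_i\rangle$ is a one-dimensional $C^*$-subalgebra of $\mathscr{A}$, a short elementary argument (any nonzero element can, up to a unimodular scalar, be taken self-adjoint, and then being closed under squaring forces it to be a scalar multiple of a projection) shows that $\langle\mathscr{E}_i,\mathscr{E}_i\rangle=\mathbb{C}p_i$ for some nonzero projection $p_i\in\mathscr{A}$. Choosing a norm-one generator $e_i$ of $\mathscr{E}_i$, the element $\langle e_i,e_i\rangle$ is positive of norm one inside $\mathbb{C}p_i$, so necessarily $\langle e_i,e_i\rangle=p_i$. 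Every $x\in\mathscr{E}_i$ then has the form $\lambda e_i$ for some $\lambda\in\mathbb{C}$, giving $|x|=|\lambda|p_i$ and $|x|^2=|\lambda|^2 p_i$.

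The algebra $\mathbb{C}p_i$ carries a unique state $\varphi_i$, namely $\alpha p_i\mapsto\alpha$, and for any $x=\lambda e_i$ one computes $\varphi_i(|x|)=|\lambda|=\varphi_i(|x|^2)^{1/2}$. Hence the module independence inequality \eqref{Hil1} collapses to the single condition: find constants $m,M>0$ and one state $\varphi\in S(\mathscr{A})$ with $m\le\varphi(p_i)\le M$ for both $i=1,2$. The upper bound $M=1$ is automatic from $\|p_i\|=1$, so the substantive task is just to produce a state whose values at $p_1$ and $p_2$ are uniformly bounded away from zero.

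For this I would invoke fact (iv) of the introduction: since each $p_i$ is self-adjoint of norm one, there exists $\psi_i\in S(\mathscr{A})$ with $\psi_i(p_i)=1$. Setting $\varphi=\tfrac{1}{2}\psi_1+\tfrac{1}{2}\psi_2$ yields a state on $\mathscr{A}$ satisfying $\varphi(p_i)\ge\tfrac{1}{2}\psi_i(p_i)=\tfrac{1}{2}$ by positivity of $\psi_j(p_i)$. Thus $m=\tfrac{1}{2}$ and $M=1$ witness the module independence of $\mathscr{E}_1$ and $\mathscr{E}_2$. The only step that is not pure bookkeeping is the structural identification of $\langle\mathscr{E}_i,\mathscr{E}_i\rangle$ with $\mathbb{C}p_i$; once that is in hand, what remains is a two-line computation and an averaging trick, so no serious obstacle is anticipated.
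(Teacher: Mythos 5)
Your proof is correct and follows essentially the same route as the paper's: identify $\langle\mathscr{E}_i,\mathscr{E}_i\rangle$ as $\mathbb{C}p_i$ for a projection $p_i$ (the paper phrases this as the generator being a partial isometry up to scalar, with $p=u^*u$, $q=v^*v$), reduce \eqref{Hil1} to bounding $\varphi(p_i)$ between $m$ and $M$, and take $\varphi$ to be the average of two states each attaining the value $1$ at one of the projections, yielding $m=\tfrac{1}{2}$ and $M=1$. The paper's state is written as $\varphi(a)=\tfrac12(\widetilde{\varphi}_1(pap)+\widetilde{\varphi}_2(qaq))$, but since $\widetilde{\varphi}_i$ is definite at the corresponding projection this coincides with your plain convex combination.
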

\begin{proof}
Let $x\in\mathscr E_1$, $x\neq 0$. Then $xx^*x=x\langle x,x\rangle=\lambda x$ for some $\lambda\in\mathbb C$. Hence $x$ is a partial isometry up to a scalar multiple. Let $u,v\in\mathscr A$ be partial isometries generating $\mathscr E_1$ and $\mathscr E_2$, respectively. Set $p:=u^*u$ and $q:=v^*v$.  

Let $\varphi_i$ be the unique state on $\langle\mathscr E_i,\mathscr E_i\rangle$, $i=1,2$. Then $\varphi_1(p)=1$ and  $\varphi_2(q)=1$. Let $\widetilde{\varphi}_i$ be the state that extends $\varphi_i$ to the whole $\mathscr A$, and set 
$$\varphi(a):=\frac{1}{2}(\widetilde{\varphi}_1(pap)+\widetilde{\varphi}_2(qaq)).$$

If $x\in\mathscr E_1$, $\|x\|=1$, then $|x|=p$. Hence $\varphi_1(|x|)=\varphi(|x|^2)^{\frac{1}{2}}=1$, while $\varphi(|x|)=\varphi(p)\geq\frac{1}{2}\widetilde{\varphi}_1(p)=\frac{1}{2}$ and $\varphi(|x|)\leq \frac{1}{2}\varphi_1(p)+\frac{1}{2}\widetilde{\varphi}_2(qpq)\leq \frac{1}{2}+\frac{1}{2}\|qpq\|\leq 1$. Similar estimates hold for $x\in\mathscr E_2$. Thus, $\mathscr E_1$ and $\mathscr E_2$ are module independent with $m=\frac{1}{2}$ and $M=1$.    

\end{proof}

In dimensions greater than one, more effects can be noted. In particular, we show that the module independence (and the $C^*$-independence) is not stable under small perturbations.

\begin{example}
Let $\mathscr A=M_4$. Set 
$$
p_t=\left(\begin{matrix}1&0&0&0\\0&\cos^2\frac{\pi t}{2}&\sin \frac{\pi t}{2}\cos \frac{\pi t}{2}&0\\0&\sin \frac{\pi t}{2}\cos \frac{\pi t}{2}&\sin^2\frac{\pi t}{2}&0\\0&0&0&0\end{matrix}\right); 
p_0=\left(\begin{matrix}1&0&0&0\\0&1&0&0\\0&0&0&0\\0&0&0&0\end{matrix}\right);
p_1=\left(\begin{matrix}1&0&0&0\\0&0&0&0\\0&0&1&0\\0&0&0&0\end{matrix}\right).
$$ 
Let $\mathscr A_t\subseteq\mathscr A$ be the two-dimensional commutative subalgebra generated by $p_t$ and $1-p_t$. Let $\varphi_1$ be a state on $\mathscr A_0$ and let  $\varphi_2$ be a state on $A_t$. They are determined by their values on $p_0$ and $p_t$, respectively. Set $\varphi_1(p_0):=\alpha$ and $\varphi_2(p_t):=\beta$. Let $\varphi$ denote a state on $\mathscr A$.    

First, consider the case $t=0$. Then the two copies of $\mathscr A_0$ are not module independent. Indeed, take $\alpha=0$ and $\beta=1$. Then there should exist $m,M>0$ and a state $\varphi$  such that 
\begin{equation}\label{1}
0=m\varphi_1(p_0)\leq\varphi(p_0)\leq M\varphi_1(p_0)^{\frac{1}{2}}=0
\end{equation}
 and 
\begin{equation}\label{2}
m=m\varphi_2(p_0)\leq\varphi(p_0)\leq M\varphi_2(p_0)^{\frac{1}{2}}=M, 
\end{equation}
but (\ref{1}) implies that $\varphi(p_0)=0$, while (\ref{2}) requires that $\varphi(p_0)$ is separated from 0. 

Now, consider the case $t=1$. Then $\mathscr A_0$ and $\mathscr A_1$ are module independent with $m=M=1$. Indeed, set 
$$
\varphi\left(\begin{matrix}a_{11}&a_{12}&a_{13}&a_{14}\\a_{21}&a_{22}&a_{23}&a_{24}\\a_{31}&a_{32}&a_{33}&a_{34}\\
a_{41}&a_{42}&a_{43}&a_{44}\end{matrix}\right):=\alpha\beta a_{11}+\alpha(1-\beta)a_{22}+(1-\alpha)\beta a_{33}+(1-\alpha)(1-\beta)a_{44}.
$$
Then 
$$
\alpha=\varphi_1(p_0)=\varphi(p_0)=\alpha\beta+\alpha(1-\beta)\leq\varphi_1(p_0)^{\frac{1}{2}}=\sqrt{\alpha},
$$
$$
1-\alpha=\varphi_1(1-p_0)=\varphi(1-p_0)=(1-\alpha)\beta+(1-\alpha)(1-\beta)\leq\varphi_1(1-p_0)^{\frac{1}{2}}=\sqrt{1-\alpha},
$$
$$
\beta=\varphi_2(p_1)=\varphi(p_1)=\alpha\beta+(1-\alpha)\beta=\beta\leq\varphi_2(p_1)^{\frac{1}{2}}=\sqrt{\beta},
$$
$$
1-\beta=\varphi_2(1-p_1)=\varphi(1-p_1)=\alpha(1-\beta)+(1-\alpha)(1-\beta)=1-\beta\leq\varphi_2(1-p_1)=\sqrt{1-\beta}.
$$

Now, fix $t\in(0,1)$, and take $\alpha=1$ and $\beta=0$. It follows from $\varphi(p_t)\leq M\varphi_2(p_t)^{\frac{1}{2}}=0$ that $\varphi(p_t)=0$. Let $e_1=\operatorname{diag}(1,0,0,0)$ and let $q_t=p_t-e_1$. As both $e_1$ and $q_t$ are positive, $\varphi(p_t)=0$ implies that $\varphi(e_1)=0$ and $\varphi(q_t)=0$. As $\varphi(1-p_0)\leq M\varphi_1(1-p_0)=0$, we have $\varphi(p_0)=1$ (as $\varphi(1)=1$). Then $\varphi(e_2)=1$, where $e_2=\operatorname{diag}(0,1,0,0)$. This implies that $\varphi(A)=a_{22}$, where $A=(a_{ij})_{i,j=1}^4$, which contradicts $\varphi(q_t)=0$. Thus, for the given states $\varphi_1$ and $\varphi_2$, there is no state $\varphi$ satisfying the condition of module independence. Therefore  the pair $(\mathscr A_0,\mathscr A_t)$ is not module independent for any $t\in[0,1)$, while the pair $(\mathscr A_0,\mathscr A_1)$ is module independent. In conclusion,  module independence is not stable   under small perturbations.
\end{example}
\medskip

\noindent \textit{Conflict of Interest Statement.} On behalf of all authors, the corresponding author states that there is no conflict of interest.\\

\noindent\textit{Data Availability Statement.}  Data sharing not applicable to this article as no datasets were generated or analysed during the current study.

\medskip

\end{document}